\newcommand{\T}{{\mathbf T}^m}
\newcommand{\kahler}{K\"ahler }
\newcommand{\PP}{{\mathbb P}}
\newcommand{\R}{{\mathbb R}}
\newcommand{\C}{{\mathbb C}}
\newcommand{\Z}{{\mathbb Z}}
\newcommand{\CP}{\C\PP}
\renewcommand{\phi}{\varphi}
\newcommand{\lcal}{\mathcal{L}}
\def    \Z  {{\mathbb Z}}
\def    \T  {{\mathbb T}}
\def    \R  {{\mathbb R}}
\def    \C  {{\mathbb C}}
\newtheorem{maintheo}{{\sc Theorem}}
\newtheorem{theo}{{\sc Theorem}}[section]
\newtheorem{remark}[theo]{{\sc Remark}}
\newtheorem{lem}[theo]{{\sc Lemma}}
\newtheorem{prop}[theo]{{\sc Proposition}}
\newenvironment{defin}{\medskip\noindent{\it Definition:\/} }{\medskip}
\begin{document}
\title[Szasz Analytic Functions and Noncompact K\"{a}hler Toric Manifolds]
{Szasz Analytic Functions and Noncompact K\"{a}hler Toric Manifolds}
\author[Renjie Feng]{Renjie Feng}
\address{Department of Mathematics, Johns Hopkins University, USA}
\email{rfeng@math.jhu.edu}
\date{\today}
\maketitle
\begin{abstract} We show that the classical Szasz analytic function $S_N(f)(x)$ is obtained
by applying the pseudo-differential operator $f(N^{-1}D_{\theta})$ to the Bergman kernels  for the Bargmann-Fock space. The expression generalizes immediately to any smooth  polarized noncompact complete toric \kahler manifold, defining the generalized Szasz analytic function $S_{h^N}(f)(x)$.  About $S_{h^N}(f)(x)$, we prove that it admits complete asymptotics and there exists a universal scaling limit. % We also consider some dilation operator composed with $S_{h^N}(f)(x)$ and we give an estimate about this composition.
  As an example, we will further compute $S_{h^N}(f)(x)$ for the Bergman metric on the unit ball. %In addition, we give some probabilistic understanding of some special
 \end{abstract}
\section*{Introduction}
\subsection{Introduction}
In \cite{Z1}, S. Zelditch relates the classical Bernstein polynomials \cite{B} to the Bergman kernel for the Fubini-Study metric on $\CP^m$, and then generalizes this relation to any smooth polarized compact \kahler toric manifold. In this article, we will further generalize the construction to any smooth polarized noncompact complete toric \kahler manifolds, and introduce the generalized Szasz analytic function. In the model case of the Bargmann-Fock space (see subsection \ref{bf}), the generalized Szasz analytic function is the classical Szasz analytic function \cite{S}: For any $f\in C^{\infty}(\R^m)$,
 \begin{equation} \label{SZASZ} S_{h^N_{BF}}(f)(x) = e^{-N\|x\|}\sum _{\alpha \in \Z_+^m}f(\frac{\alpha}{N})\frac{(Nx)^{\alpha}}{\alpha!}\end{equation}
 where $\|x\|=x_1+\cdots+x_m$, $\alpha=(\alpha_1,\ldots, \alpha_m)$ is a lattice point in $\Z_+^m$, $\alpha!=\alpha_1!\cdots \alpha_m!$ and $x^{\alpha}=x_1^{\alpha_1}\cdots x_m^{\alpha_m}$. In \cite{S}, O. Szasz proved that if $f(x)$ is a smooth Schwartz function, then $S_{h^N_{BF}}(f)(x)$ converges to $f(x)$ uniformly on any compact subset of $\R_{+}^m$, i.e., he generalized the Bernstein polynomials to the infinite interval.

In section \ref{F}, we show that the classical Szasz analytic function can be expressed in term of the the Bargmann-Fock space in two ways. Let $e^{i\theta}=(e^{i\theta_1},\ldots,e^{i\theta_m})$ be the standard $\mathbb{T}^m$
action on $\C^m$ and denote $D_{\theta}=(\frac{1}{i}\frac{\partial}{\partial\theta_{1}},\ldots, \frac{1}{i}\frac{\partial}{\partial\theta_{m}})$ where $\frac{1}{i}\frac{\partial}{\partial\theta_{j}}$ are generators of $\mathbb{T}^{m}$ action, in Lemma \ref{fgh}, we show: \begin{equation}\label{pbvuytre}  S_{h^N_{BF}}(f)(x)=\frac{1}{B_{h_{BF}^N}(z,z)}f(N^{-1}D_{\theta})B_{h^N_{BF}}(e^{i \theta}z,z)|_{\theta=0,\,\,z=\mu^{-1}_{h_{BF}}(x)}
\end{equation}
where $B_{h_{BF}^N}$ is the Bergman kernel for the  Bargmann-Fock space, $\mu_{h_{BF}}=(|z_1|^2,\ldots, |z_m|^2)$ is the moment map with respect to $\T^m$ action and the hermitian metric $h_{BF}=e^{-\|z\|^2}$, where $\|z\|^2=|z_1|^2+\cdots+|z_m|^2$. And for any $f\in C_0^{\infty}(\R^m)$, $f(N^{-1}D_{\theta})$ is defined by the spectral theorem: \begin{equation} \label{laghdhs}f(N^{-1}D_{\theta}) e^{i \langle \xi, \theta \rangle} = f(\frac{\xi}{N}) e^{i \langle \theta, \xi \rangle}, \end{equation}
In Lemma \ref{TUY}, $S_{h^N_{BF}}(f)$ can also be expressed in term of the symplectic potential as   \begin{equation}\label{symp}S_{h_{BF}^N}(f)(x)  =  \frac{1}{B_{h_{BF}^N}(z,z)}\sum_{\alpha\in \Z^m_+}f(\frac{\alpha}{N})\frac{e^{N(u_{BF}(x)+\langle\frac{\alpha}{N}-x,\nabla u_{BF}(x)\rangle)}}{\|z^{\alpha}\|^{2}_{h_{BF}^{N}}}|_{z=\mu_{h_{BF}} ^{-1}(x)}\end{equation} where $u_{BF}(x)$ is the symplectic potential for the Bargmann-Fock space.

Both formulas (\ref{pbvuytre}) and (\ref{symp}) can be generalized to any smooth polarized noncompact complete \kahler toric manifold $(M,\omega)$, where $\omega \in H^{(1,1)}(M,\Z)$ is a $\T^m$ invariant \kahler form. Let $(L,h)$ be a polarization of $M$, i.e., a positive hermitian holomorphic line bundle such that the curvature satisfies $R(h)=\omega$ (see subsection \ref{toric manifold}). Let $\mu_h(z)$ be the moment map associated to $(L,h) \rightarrow (M,\omega)$, let $P$ be the image of the moment map $\mu_{h}(z)$. % Then the basis of the global holomorphic sections of $L^N$ is given by monomials  $z^{\alpha}$ with $\alpha\in NP\cap \Z^m$ (see Proposition \ref{monim}). For convenience, throughout the article, we assume that all such monomials are $L^2$ holomorphic functions (see condition (\ref{assume})), but this condition is not necessary, our results and arguments are still valid without this assumption.
 In section \ref{djghsfdf}, we define the generalized Szasz analytic functions $S_{h^N}(f)$ for $(L,h)$ by using the Bergman kernel and the symplectic potential (see Definition \ref{hgfd}).

%Assume all monomials $z^\alpha$ with $\alpha\in NP\cap \Z^m$ are $L^2$ holomorphic sections of $L^N$.
 \subsection{Main results}Our first result is that the generalized Szasz analytic function admits a complete asymptotics and each coefficient can be derived from the complete asymptotics of the Bergman kernel.
\begin{maintheo} \label{BBa} Let $(L, h) \rightarrow (M, \omega)$ be a positive hermitian holomorphic
line bundle over a noncompact complete \kahler toric manifold with a proper moment map $\mu_h(z)$. Assume the hermitian metric $h$ and \kahler form $\omega$ are toric invariant and $R(h)=\omega$.  Let $f\in C^{\infty}_0(\R^m)$ and denote $K$ as the compact support of $f$. Let $S_{h^N}(f)(x)$ be the generalized Szasz analytic function in the sense of Definition \ref{hgfd}. If there exists $\gamma>0$ such that $R(h)>- \gamma  Ric(\omega)$, then there exists complete asymptotics on the compact subset $\mu_h^{-1}(K)\subset M$:  $$ S_{h^N}(f)(x) = f(x) + \lcal_1 f(x) N^{-1} + \lcal_2 f(x) N^{-2} + \cdots + \lcal_n f(x) N^{-n}
+ O(N^{- n - 1}) $$ for any positive integer $n$ as $N$ goes to infinity, where $\lcal_j$ is
a differential operator of order $2j$ depending only on the curvature
of the metric $h$; the expansion may be differentiated
infinitely many times.
\end{maintheo}
In our next theorem, we study the boundary behavior of generalized Sazsz analytic function. To be more precise, we study the behavior of $S_{h^N}(f)(x)$ when $x$ tends to a corner or some facets of $P$. In order to do this, we focus on the neighborhood with radius $1/N$ around a corner or a point on some facets, and by using the dilation operator, we magnify the neighborhood by the factor $N$ to get a neighborhood independent of $N$, then it leads to a universal scaling formula which is similar to Poisson Limit Law.%; while the second comes from geometric problems present in \cite{SoZ} (see subsection \ref{motivation}). 

We first consider the behavior of $S_{h^N}(f)(x)$ as $x$ tends to a corner of $P$. We pick a vertex $V$ of $P$, i.e., the image of a fixed point of the real torus $\T^m$ action on $M$ under the
moment map. We use affine
transformations to map $V$ to be the origin and all facets that meet at $V$ to the hyperplanes $x_j=0$. %to map all facets that meet at the origin to the hyperplanes $x_j = 0$. 
In these new coordinates, we may write the
symplectic potential near the origin as \cite{A, SoZ}:
\begin{equation}\label{corndgd}u_{\phi}(x)=\sum_{j=1}^mx_j \log x_j +h(x)\end{equation}
where $h(x)$ is a smooth function on $P$. Now consider the operator $D_{\frac{1}{N}} S_{h^N}D_{\frac{1}{N}}^{-1} $ where $D_{\frac{1}{N}}$ is the dilation operator defined by $D_{\frac{1}{N}} f(x) = f(\frac{ x}{N})$.
%This dilation operator magnifies the corner so that a $1/N$ neighborhood of the corner is expanded to a neighborhood of fixed size. 
By Definition \ref{hgfd} of generalized analytic functions, we have\begin{equation}\label{scaling}\begin{array}{lll}(D_{\frac{1}{N}} S_{h^N}D_{\frac{1}{N}}^{-1} )f(x)&=&\frac{1}{B_{h^{N}}(\mu_h^{-1}(\frac{x}{N}),\mu_{h}^{-1}(\frac{x}{N}))}\mathcal{D}_{h^{N}}f(x)\quad{where}
\\ && \\ \mathcal{D}_{h^{N}}f(x) &=&\sum_{\alpha\in NP\cap \Z^m }f(\alpha)\frac{e^{N(u_{\phi}(\frac{x}{N})+\langle\frac{\alpha}{N}-\frac{x}{N},\nabla u_{\phi}(\frac{x}{N})\rangle)}}{\|z^{\alpha}\|^{2}_{h^{N}}}\end{array}\end{equation}where $\mu_h(z)$ is the moment map and $u_{\phi}(x)$ is the symplectic potential associated to the \kahler toric manifold.  %,  $P$ is the image of the moment map $\mu_{h}(z)$.
  Then we show that  $S_{h_{BF}^1}(f)$ is the universal scaling limit of $D_{\frac{1}{N}} S_{h^N}D_{\frac{1}{N}}^{-1}$ as $N$ goes to infinity.
 
More generally, we consider the boundary behavior of $S_{h^N}(f)(x)$ as $x$ tends to some facets of $P$. In this wall case, we dilate in the variables transverse to the boundary of $P$: We again use the affine transformation so that the facets that meet at a corner are given by $x_j=0$, write $x=(x',x'')\in \R^m$ where $x'=(x_1,\ldots,x_k) \in \R^k$ and $x''=(x_{k+1},\ldots,x_m) \in \R^{m-k}$, then define the dilation operator $\hat D_{\frac{1}{N}} f(x',x'')=f( \frac{x'}{N},x'')$. %Then by defining the operator $\hat D_{\frac{1}{N}} S_{h^N}\hat D_{\frac{1}{N}}^{-1}$, we in fact do analysis at the point $(\frac{x'}{N},x'')$ which goes to $(0, x'')$ as $N$ goes to infinity. 
We show that $\hat D_{\frac{1}{N}} S_{h^N}\hat D_{\frac{1}{N}}^{-1}$ tends to $S_{h_{BF}^1}(f_{x''})(x')$ as $N$ goes to infinity. Here, $f_{x''}(x')$ means we fix the variables $x''$ and consider $f(x',x'')$ as a function of $x'$ and $S_{h_{BF}^1}(f_{x''})(x')$ is the classical Szasz analytic functions with respect to variables $x'$.%, here we fix the variables $x''$ and consider $f(x',x'')$ as a function $f_{x''}(x')$ and $S_{h_{BF}^1}(f_{x''})(x')$ is the Szasz analytic functions with respect to the variables $x'$.%  then define $(\hat D_{\frac{1}{N}} S_{h^N}\hat D_{\frac{1}{N}}^{-1} )f(x',x'')$ which reads: \begin{equation}\begin{array}{lll}(\hat D_{\frac{1}{N}} S_{h^N}\hat D_{\frac{1}{N}}^{-1} )f(x',x'')&=&\frac{1}{B_{h^{N}}(\mu_h^{-1}(\frac{x'}{N},x''),\mu_{h}^{-1}(\frac{x'}{N},x''))}\mathcal{D}_{h^{N}}f(x',x'')\,\,\mbox{where}
%\\ && \\ \mathcal{\hat D}_{h^{N}}f(x',x'') &=&\sum_{\alpha\in \Z^m \cap NP}f(\alpha',\frac{\alpha''}{N})\frac{e^{N(u_{\phi}(\frac{x'}{N},x'')+\langle(\frac{\alpha'}{N},\frac{\alpha''}{N})-(\frac{x'}{N},x''),\nabla u_{\phi}(\frac{x'}{N},x'')\rangle)}}{\|z^{\alpha}\|^{2}_{h^{N}}}\end{array}\end{equation} 

\begin{maintheo}\label{ghfhgg}% Assume all monomials $z^\alpha$ with $\alpha\in NP\cap \Z^m$ are $L^2$ holomorphic functions of the line bundle $L^N$, then 	
%Assume all monomials $z^\alpha$ with $\alpha\in NP\cap \Z^m$ are $L^2$ holomorphic sections of the line bundle $L^N$,
With all assumptions in Theorem \ref{BBa}, there exist differential operators $b_{j}$  % \colon C^{\infty}(\R^m) \rightarrow C^{\infty}(\R^m)$
 such that for $f\in C_0^{\infty}(\R^m)$,% about $D_{\frac{1}{N}} S_{h^N}D_{\frac{1}{N}}^{-1}$ we have the following two complete asymptotics,
 \begin{equation}\label{corner}(D_{\frac{1}{N}} S_{h^N}D_{\frac{1}{N}}^{-1} )f(x) = S_{h_{BF}^1}(f)(x) +  b_1(f)(x)N^{-1}+\cdots +b_n(f)(x)N^{-n}+O(N^{-n-1})\end{equation}% where %$S_{h_{BF}^1}(f)(x)$ is the universal limit, and  
 for any positive integer $n$.  In particular, $$b_{1}(f)(x)=\frac{1}{2}\sum_{i,j}a_{ij}(x)\left(e^{-\|x\|}\sum_{\alpha \in \Z_+^m}f_{ij}(\alpha)\frac{x^\alpha}{\alpha!}\right)$$ where $f_{ij}=\frac{\partial ^2 f}{\partial x_i\partial x_j}$ and $(a_{ij}(x))_{i,j}$ denotes the matrix $M^2(x)\nabla^2 h(0)$ where h(x) is defined by (\ref{corndgd}) and $M(x)=diag\{x_1,x_2,...,x_m\}$ is the diagonal matrix where $x=(x_1,\cdots, x_m )$.

More generally, in the wall case, there exist differential operators $c_j$ such that
 \begin{equation}\label{wall}(\hat D_{\frac{1}{N}} S_{h^N}\hat D_{\frac{1}{N}}^{-1} )f(x',x'') = S_{h_{BF}^1}(f_{x''})(x') + c_1(f)(x',x'')N^{-1}+\cdots +c_n(f)(x',x'')N^{-n}+ O(N^{-n-1}) \end{equation}  %where $c_{j}(x',x'')$ are smooth functions.
 \end{maintheo}
Just as Bernstein's polynomials can be expressed in terms of binomial random variables, the classical Szsaz analytic function can be expressed in terms of Poisson random variables (see section \ref{example}). Theorem \ref{ghfhgg} also has a probalilistic interpretation: if $M$ is $\mathbb{CP}^{1}$ with Fubini-Study metric, then $S_{h_{FS}^N}(f)(x)$ becomes the classical Bernstein polynomial \cite{Z1}: $$S_{h_{FS}^N}(f)(x)=\sum_{k=0}^N {N \choose k}f(\frac{k}{N})(1-x)^{N-k}x^k$$ Thus, $$(D_{\frac{1}{N}}S_{h_{FS}^N}D_{\frac{1}{N}}^{-1})f(x)=\sum_{k=0}^{N}{N \choose k}f(k )(1-\frac{x}{N})^{N-k}(\frac{x}{N})^{k}$$
Then (\ref{corner}) reads \begin{equation}\label{possionge} \sum_{k=0}^{N}{N \choose k}f(k )(1-\frac{x}{N})^{N-k}(\frac{x}{N})^{k}=\sum_{k=0}^\infty e^{-x}\frac{x^k}{k!}f(k)+b_1(f)(x)N^{-1}+\cdots . \end{equation} 
Now we compute the second term $b_1(f)(x)$: On the \kahler toric manifold $(\CP^1, \omega_{FS})$, the
symplectic potential is,
$$u_{FS}(x) = x \log x + (1 - x) \log(1 - x)$$ thus 
$h(x)$ is just $(1 - x) \log(1 - x)$,
hence,
$$b_1(f)(x) =\frac{1}{2}x^2\left(e^{-x}\sum_{\alpha =0}^{\infty}f''(\alpha)\frac{x^\alpha}{\alpha!}\right)$$
%Follow the proof of Theorem \ref{ghfhgg}, we get the following estimate,  while in this special case, the second term can be given explicitly: \begin{cor} \label{possion}Let $f\in C^{\infty}_0([0,\infty))$, then we have \begin{equation} \sum_{k=0}^{N}{N \choose k}f(k )(1-\frac{x}{N})^{N-k}(\frac{x}{N})^{k}=\sum  _{k=0}^{\infty}e^{-x}\frac{x^{k}}{k!}f(k)
 %+  b_{1}(x)N^{-1}+O(N^{-2})\label{tyuier} \end{equation}
%for $N$ large enough where $$b_1(x)=\frac{1}{2}x^2(S_{h_{BF}^1}(f))^{''}$$ \end{cor} %In this Fubini-Study case, $M(x)=x$, the Hessian matrix is $H_{\phi}=x(1-x)$, thus $b_{1}$ is given by $$$$
Equation (\ref{possionge}) refines the Poisson Limit Law \cite{R} which says that $${N \choose k} (1-\frac{x}{N})^{N - k}
(\frac{x}{N})^k \rightarrow e^{-x} \frac{x^k}{k!} \,\,\, \mbox{as}\,\,\, N\rightarrow \infty $$
Thus our result that $D_{\frac{1}{N}} S_{h^N}D_{\frac{1}{N}}^{-1}$ admits complete asymptotics generalizes the Poisson Limit Law to any \kahler toric manifold.

%It's also natural to consider the dilation operator D_{\frac{1}{N}}S_{h^N}(f) which reads S_{h^N} (f)(\frac{ x{{
%N} ). This
%is a small variation of the generalized Szasz analytic function where we do not dilate f(x)
%but only the weight. This operator is signicant in [SoZ]: in [SoZ], the authors try to get
%asymptotic expansion of ShN (f)( x
%N ) when f(x) is just a lattice function which is not smooth.
%About this operator, we have the following formal asymptotic expansion which is not
%complete asymptotics: 
In section \ref{example}, we will further compute the generalized Szasz analytic function for the unit
ball with the Bergman metric. And we will give some probabilistic interpretation of some
generalized Szasz analytic functions: we will relate $(\CP^1, \omega_{FS})$ with binomial distribution,
the Bargmann-Fock space with Poisson distribution and the Bergman metric on the unit
disk with Pascal distribution.

\bigskip
\emph{Notations}: Throughout the article, denote $|z|^2$ as the vector $(|z_1|^2, \ldots, |z_m|^2)$, then $\phi(|z|^2)$ is denoted as a function of $|z_1|^2, \ldots, |z_m|^2$ and denote $z^{\alpha}=z_1^{\alpha_1}\cdots z_m^{\alpha_m}$.

\bigskip
\textbf{Acknowledgements}: I am sincerely grateful to Prof. S. Zelditch for his patience to guide me to this beautiful math world. I also want to thank the referee for many helpful comments in the original version. This paper will never come out without their helps and many suggestions.
\section{Background}\label{fhdshd}
\subsection{\kahler toric manifolds}\label{toric manifold}
In this section, we discuss some basic properties of complete \kahler toric manifolds, see \cite{A, F, G, KL, Z1} for more details.

\begin{defin}\label{kjhhgggf}A complex $m$-dimensional \kahler toric manifold is a smooth complete \kahler manifold  $(M,\omega)$ equipped with an effective Hamiltonian holomorphic real torus $\mathbb{T}^m=\R^m/2\pi\Z^m $ action and with a corresponding proper moment map $\mu : M \rightarrow \mathbb{R}^m$.
 \end{defin}
 
For example, any bounded pseudoconvex Reinhardt Domain with smooth boundary equipped with its Bergman metric is a noncompact complete \kahler toric manifold. An open set $X \subset \C^m$ is a Reinhardt domain if
$(z_1, \cdots, z_m) \in X$ implies that $(e^{i\theta_1}z_1, \cdots, e^{i\theta_m}z_m) \in X$, i.e., it's closed under the real torus
$\T^m$ action. On any bounded pseudoconvex Reinhardt Domain $X$, there exists a complete \kahler metric: the Bergman metric $\phi_B$ \cite{DO}. Furthermore, $\phi_B$ is invariant
under $\T^m$ action, hence $\T^m$ acts on $X$ in a Hamiltonian way with respect to the Bergman metric, which implies that $(X, \frac{i}{2}\partial\bar\partial \phi_B)$ is a complete \kahler toric manifold.  As an example, we will consider the case of the unit ball in section \ref{example}.

\bigskip 
Let $(M, \omega)$ be a complex $m$-dimensional complete \kahler toric manifold, then there exists an open dense obit $M^o \cong \R^m \times i\T^m$. On the open dense obit, we have the following local logarithmic coordinate \cite{A}:
  $$z=(z_1,\ldots, z_m)=(e^{\frac{\rho_1}{2}+i\theta_1},\ldots , e^{\frac{\rho_m}{2}+i\theta_m})$$ where $\rho=(\rho_1,\ldots , \rho_m) \in \R^m$ and $\theta=(\theta_1,\ldots, \theta_m) \in [0,2\pi]^m$. Thus we can denote $$e^{i\theta}=(e^{i\theta_1},\ldots,e^{i\theta_m})$$ as the $\T^m$ action on the open orbit of $M$ and denote $$D_{\theta}=(\frac{1}{i}\frac{\partial}{\partial\theta_{1}},\ldots,\frac{1}{i}\frac{\partial}{\partial\theta_{m}})$$ where $\{\frac{1}{i}\frac{\partial}{\partial\theta_{k}},k=1,\ldots, m \}$ are generators of $\mathbb{T}^{m}$ action. We denote $$e^{\rho}=(e^{\rho_1},...,e^{\rho_m})=(|z_1|^2, \ldots, |z_m|^2)$$ Now assume $\omega$ is an integral $(1,1)$ form and assume $\omega$ is $\T^m$ invariant. Then there is a positive hermitian holomorphic line bundle $L \rightarrow M$ and an invariant hermitian metric $h$ on $L$ such that the curvature of $h$ satisfies $R(h)=\omega$. Here, the curvature of a hermitian metric $h$ is defined by: \begin{equation}\label{ddsfgds}R(h)=-\frac{i}{2}\partial\bar \partial \log \|e_L\|^2_h\end{equation} where $e_L$ denotes a local holomorphic frame of $L$ over an open set $U \subset M$, and $\|e_L\|_h=h(e_L,e_L)^{\frac{1}{2}}$ denotes the $h$-norm of $e_L$.

The \kahler form is locally given by $\omega=\frac{i}{2}\partial \bar\partial  \phi(z)$, where the associated hermitian metric has the form $h=e^{-\frac{\phi}{2}}$. Since $\omega$ is invariant under the real $\mathbb{T}^m$ action, then the \kahler potential $\phi$ must be of the form $$\phi(z)=\phi(|z|^2)=\phi(e^{\rho})$$ hence, $$\omega=\frac{i}{2}\sum_{j,k}\frac{\partial^2\phi}{\partial\rho_k \partial \rho_j}dz_j\wedge d\bar z_k$$ By a slight abuse of notation, we denote the \kahler potential in the
logarithmic coordinates by $\phi(\rho)$. For example, in the Fubini-Study case, the \kahler potential is $\phi(z)=\phi(|z|^2)=\log (1+|z_1|^2+\cdots+|z_m|^2)$ and we denote $\phi(\rho)=\log(1+e^{\rho_1}+\cdots +e^{\rho_m})$. Positivity of $\omega$ implies that $\phi$ is a strictly convex function of $\rho \in \R^m$.

  The real torus $\T^m$ acts on $M$ in a Hamiltonian way with respect to $\omega$, and its moment map $\mu_{h} :M \to \mathbb{R}^m $ is given by:
 \begin{equation}\mu_{h}(z_{1}, . . . , z_{m}) = \nabla _{\rho}\varphi(\rho) \,\label{utye}\end{equation}
 We denote $P$ as the image of $\mu_h(z)$, then $P$ is a noncompact polyhedral set which is called
the moment polyhedral set. $P$ in fact can be defined by a set of inequalities of
$$\langle x, v_r\rangle \geq \lambda _r, \,\,\, r = 1, \ldots, d$$
where $v_r$ is a primitive element of the lattice and inward-pointing normal to the $r$-th $(n-1)$-
dimensional face of $P$.

The symplectic potential $u_{\phi}$ defined on $P$ associated to the \kahler potential is the Legendre
dual of $\phi(\rho)$ with respect to variables $\rho$, defined as follows: for any $x = (x_1, \ldots, x_m) \in P$,
there is a unique $\rho \in \R^m$ such that $x = \nabla_\rho \phi$. Then the Legendre dual is defined to be the
convex function
\begin{equation}u_\phi(x_1, \ldots, x_m) = \langle x, \rho \rangle -\phi(\rho) \end{equation}
on $P$. By Legendre duality, we have the following pair,
\begin{equation}\label{dualone}x=\nabla_\rho \phi(\rho), \,\,\, \rho=\nabla _x u_\phi(x)\end{equation}
There exists a canonical \kahler metric and symplectic potential, defined as follows: Let
$\ell_r : \R^m \rightarrow \R$ be the affine functions,
$$\ell_r(x)=\langle x, v_r \rangle -\lambda_r$$
Then the canonical symplectic potential is defined by
$$u_0(x) =\sum_{r=1}^d \ell_r (x)\log \ell_r(x) $$
which in turn corresponds to a canoncial \kahler potential \cite{G}. Every symplectic potential
has the same singularities on the boundary $\partial P$ as the canonical symplectic potential, and
must be in the form of \cite{A}:
\begin{equation}\label{sympotential}u_\phi(x) = u_0(x) + g(x)  \end{equation}
where $g(x)$ is smooth on $P$, and the matrix $G_\phi(x) = \nabla^2
_xu_\phi(x)$ is positive definite on $P^o$
which is the interior of $P$.

We also denote by $H_\phi(\rho) =\nabla^2_\rho \phi(e^\rho)$ the Hessian of the \kahler potential on the open orbit
in $\rho$ coordinates. By Legendre duality again,
\begin{equation}\label{dualtwo}H_\phi(\rho)=  G^{-1}_\phi(x),\,\,\, \mu_h(e^\rho)=x \end{equation}

 \subsection{Bergman kernel }\
In this section, we first review the definition of the Bergman kernel for any polarized \kahler manifold. Then we sketch the proof of the existence of the complete asymptotics of the Bergman kernel on the diagonal.
 \subsubsection{Definition}
Let $( M, \omega)$ be a complete \kahler manifold of dimension $m$ (over $\C$). Let $(L,h)\rightarrow M$ be a positive hermitian holomorphic line bundle. Assume there is a hermitian metric $h$ on $L$ such that the curvature $R(h)=\omega$.  $h$ induces a hermitian metric $h^{N}$ on $L^{N}$ by $\|{e_{L}^{\otimes N}}\|_{h^{N}}=\|{e_{L}}\|_{h}^{N}$, where $e_L$ is a local holomorphic frame. Locally we can write $\omega=\frac{i}{2} \partial \bar \partial \phi$, thus $h= e^{-\frac{\phi}{2}}$ and $h^N=e^{-\frac{N\phi}{2}}$.

Let $H^{0}_{L^2}(M,L^{N})$ be the weighted space of all $L^{2}$ global holomorphic sections of the line bundle $L^{N}$. This means for each local frame $e_{L}$ of $L$ , we can write each section of $H^{0}_{L^2}(M,L^{N})$ as $s=f e^{\otimes N}_{L}$, where $f$ is a holomorphic function and satisfies:
$$\|s\|^2_{h^N}=\int _M |f|^2 e^{-N \phi} \frac{\omega ^{m}}{m!}< \infty $$ Thus $H^{0}_{L^2}(M,L^{N})$ will be a Hilbert space with the natural inner product $Hilb_N(h)$ induced by  $h^{N}$: \begin{equation} \label{gfsgsh} \langle s_{1},s_{2}\rangle _{Hilb_N(h)}= \int_{M} h^{N}(s_{1},s_{2})\frac{\omega ^{m}}{m!}=\int _M f_1\bar f_2 e^{-N \phi} \frac{\omega ^{m}}{m!}
 \end{equation} where $s_{1}=f_1e^{\otimes N}_L,s_{2}=f_2e^{\otimes N}_L\in H^{0}_{L^2}(M,L^{N})$.

We define the Bergman kernel as the orthogonal projection from the $L^2$ sections to the holomorphic $L^2$ sections with respect to the inner product $Hilb_N(h)$,
\begin{equation}B_{h^N}: L^{2}(M,L^{N})\rightarrow H^{0}_{L^2}(M,L^{N})\end{equation}
Furthermore, if $\{s_{j}\}_{j=1} ^{}$ is an orthonormal basis of  $H^{0}_{L^2}(M,L^{N})$, then: \begin{equation}B_{h^N}(z,w)=\sum_{j=1}^{}s_j(z)\otimes \overline{s_j(w)} \label{ldh}\end{equation}
Note that in the compact case, the dimension of $H^0_{L^{2}}(M, L^N)$ is finite, but in the noncompact case, it is generally infinite.
\subsubsection{Bergman kernel for toric manifolds}
Let $(M,\omega)$ be a polarized complete \kahler toric manifold and assume $\omega$ is $\T^m$ invariant. Let $(L,h)$ be a positive hermitian holomorphic line bundle over $M$ such that $R(h)=\omega$, then we have the following proposition \cite{F,G}:
\begin{prop}\label{monim}The global holomorphic sections of $L^N$ are $$ H^{0}(M,L^{N}) = \bigoplus_{\alpha \in NP\cap \mathbb{Z}^m } \mathbb{C}  \cdot z^{\alpha}$$ 
\end{prop}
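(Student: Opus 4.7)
The plan is to decompose $H^0(M, L^N)$ into weight spaces for the $\T^m$ action, identify each weight space with a monomial $z^\alpha$, and then characterize which monomials give rise to honest ($L^2$) global sections via a convergence analysis governed by the Legendre transform.

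First I would use the fact that since $h$ and $\omega$ are $\T^m$-invariant, the torus action lifts to the line bundle $L^N$ and acts unitarily on the space of holomorphic sections with respect to the inner product $Hilb_N(h)$. Decomposing into characters of $\T^m$ gives
\[
H^0(M,L^N) \;=\; \bigoplus_{\alpha \in \Z^m} H^0(M,L^N)_\alpha,
\]
where $H^0(M,L^N)_\alpha$ consists of sections transforming by $e^{i\langle\alpha,\theta\rangle}$. On the open orbit $M^o \cong \R^m \times i\T^m$, a holomorphic section $s = f(z)\, e_L^{\otimes N}$ lies in the $\alpha$-weight space precisely when $f(z)$ is a holomorphic function of the logarithmic coordinates transforming as $e^{i\langle\alpha,\theta\rangle}$, forcing $f(z) = c_\alpha z^\alpha$. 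So each weight space on the open orbit is at most one-dimensional and spanned by $z^\alpha$.

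Next I would verify which $\alpha \in \Z^m$ actually give globally defined holomorphic (indeed $L^2$) sections. Using the logarithmic coordinates $z_j = e^{\rho_j/2 + i\theta_j}$, $T^m$-invariance of $\phi$, and integrating out $\theta$, the squared norm becomes
\[
\|z^\alpha\|^2_{h^N} \;=\; (2\pi)^m \int_{\R^m} e^{\langle \alpha,\rho\rangle - N\phi(\rho)}\, \det H_\phi(\rho)\, d\rho,
\]
up to normalization. Strict convexity of $\phi$ together with $R(h)>0$ makes the exponent $\langle\alpha,\rho\rangle - N\phi(\rho)$ a strictly concave function of $\rho$, whose supremum is finite iff $\alpha/N$ lies in the image of $\nabla_\rho\phi$, which is exactly $P$ by \eqref{utye}. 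Applying the Legendre duality \eqref{dualone} gives $\sup_\rho(\langle\alpha,\rho\rangle - N\phi(\rho)) = N u_\phi(\alpha/N)$ when $\alpha/N \in P$, so the integral converges, and otherwise the integrand grows exponentially along some ray and the integral diverges. Thus $z^\alpha \in H^0(M,L^N)$ iff $\alpha \in NP \cap \Z^m$.

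Finally, I would argue that there are no extra non-toric-equivariant sections beyond these: by the unitary $\T^m$-decomposition, any section is the orthogonal sum of its weight components, and each weight component is at most spanned by one $z^\alpha$. This yields the asserted direct sum decomposition. The main subtlety is the noncompact convergence analysis in step two, i.e., showing rigorously that the Legendre transform cleanly separates finite- and infinite-norm monomials; the hypothesis that $\mu_h$ is proper and that $R(h)>-\gamma\operatorname{Ric}(\omega)$ (from the ambient theorem) ensures enough growth of $\phi$ at infinity for the argument to go through.
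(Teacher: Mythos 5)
Your approach is genuinely different from the paper's treatment (the paper does not prove Proposition \ref{monim}; it quotes it from \cite{F,G} and only verifies orthogonality of the monomials on the open orbit afterwards), but as written it has a real gap: you are detecting membership in $H^{0}(M,L^{N})$ by finiteness of the weighted $L^{2}$ norm, and that is the wrong criterion. The proposition concerns \emph{all} global holomorphic sections, and whether the Laurent monomial $z^{\alpha}$ on the open orbit extends to a holomorphic section of $L^{N}$ over all of $M$ is a statement about its order of vanishing along the toric boundary divisors: it extends precisely when $\langle \alpha, v_{r}\rangle \geq N\lambda_{r}$ for every facet normal $v_{r}$, i.e.\ $\alpha\in NP$. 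This is independent of integrability. Indeed, immediately after the proposition the paper points out that some $z^{\alpha}$ with $\alpha\in NP\cap\Z^{m}$ may fail to be $L^{2}$ --- that is exactly why assumption (\ref{assume}) is imposed --- so your claimed equivalence ``the integral converges iff $\alpha/N\in P$'' is false in general, and in the direction you need it would wrongly exclude genuine global sections. Even within the $L^{2}$ analysis, finiteness of $\sup_{\rho}(\langle\alpha,\rho\rangle-N\phi(\rho))$ (which holds on the closure of $P$, with boundary lattice points being precisely the delicate cases) does not by itself give convergence of the integral against $\det H_{\phi}(\rho)\,d\rho$; this is the content of assumption (\ref{assume}), not a consequence of properness of $\mu_{h}$ or of $R(h)>-\gamma\,Ric(\omega)$.

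A second, related gap is the first step: in the noncompact setting $H^{0}(M,L^{N})$ is \emph{not} the algebraic direct sum of its $\T^{m}$-weight spaces. In the Bargmann--Fock model every entire function is a global holomorphic section, and $e^{z_{1}}$ is not a finite combination of monomials; averaging against characters produces the weight components and the density of their span (or a Hilbert-space direct sum once one restricts to $H^{0}_{L^{2}}$), but your closing step ``any section is the orthogonal sum of its weight components'' silently invokes the $Hilb_{N}(h)$ inner product and hence at best recovers the paper's statement about $H^{0}_{L^{2}}$ under assumption (\ref{assume}), not the proposition about $H^{0}$ as stated. The clean repair is the standard toric argument: keep your (correct) observation that each weight space on the open orbit is spanned by a single $z^{\alpha}$, and then decide extendability divisor by divisor via the vanishing-order inequality $\langle\alpha,v_{r}\rangle\geq N\lambda_{r}$, which is exactly the description $\alpha\in NP\cap\Z^{m}$ cited from \cite{F,G}; the $L^{2}$ and orthogonality considerations should be kept separate, as the paper does.
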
 For convenience, throughout the article, we assume that for each $\alpha\in \mathbb{Z}^m \cap NP $, \begin{equation}\label{assume} \|z^{\alpha}\|^2_{h^N} =\int_M z^{\alpha}\bar z^{\alpha}e^{-N\phi} \frac{\omega^m}{m!}<\infty\end{equation} 
Under this assumption, we have $$ H^{0}_{L^2}(M,L^{N}) = \bigoplus_{\alpha \in NP\cap \mathbb{Z}^m } \mathbb{C}  \cdot z^{\alpha}$$ But this assumption is not necessary, without this condition, some $z^\alpha$ may be not in the space of holomorphic $L^2$ sections, but our results and arguments are still valid as long as we have complete asymptotics of Bergman kernel. 

Now claim that all such  monomials in fact form an orthogonal basis of $H^{0}_{L^2}(M,L^{N})$ with respect to the inner product $Hilb_N(h)$. This can be seen if we integrate on the dense open obit $M^o \cong \R^m \times i\T^m$: $$\begin{array}{lll} \langle z^{\alpha}, z^{\beta}\rangle_{Hilb_N(h)} &=& \int_M z^{\alpha}\bar z^{\beta}e^{-N\phi} \frac{\omega^m}{m!}  =\int_{\R^m \times i\T^m} z^{\alpha}\bar z^{\beta}e^{-N\phi} \frac{\omega^m}{m!}  \\ && \\&= &\left (\int_{[0,2\pi]^m} e^{i\langle \alpha-\beta,\theta \rangle}d\theta\right)  \left( \int _{\R^m}e^{\langle \frac{\rho}{2}, \alpha+\beta \rangle }e^{-N\phi(\rho)} \det H_{\rho}(\phi)d\rho \right) \\ && \\&= & \delta _{\alpha,\beta} \int _{\R^m}e^{\langle \frac{\rho}{2}, \alpha+\beta \rangle }e^{-N\phi(\rho)} \det H_{\rho}(\phi)d\rho\end{array}$$
  where $H_{\rho}(\phi)$ is the Hessian of $\phi(\rho)$ with respect to $\rho \in \R^m$. Thus by identity (\ref{ldh}), the Bergman Kernel off the diagonal is
\begin{equation}\label{ddsdgdsg}B_{h^N}(z,w)=\sum _{\alpha\in NP\cap \mathbb{Z}^m }\frac{z^{\alpha}\bar w^{\alpha}e^{-\frac{N\phi(|z|^2)}{2}-\frac{N\phi(|w|^2)}{2}}}{\|z^{\alpha}\|_{h^{N}}^{2}}\end{equation}

\subsubsection{Complete asymptotics }
  Let $(L,h)$ be a positive hermitian holomorphic line bundle over the compact \kahler manifold $(M,\omega)$. Assume $R(h)=\omega$, then the Bergman kernel on the diagonal admits complete asymptotics which is the well known Tian-Yau-Zelditch Theorem \cite{T,Z2}, a very nice proof is given in \cite{BBS}. For the noncompact case, the similar result hold:
 \begin{theo}  \label{A}
Let $(M,\omega)$ be a complex $m$-dimensional noncompact complete \kahler manifold, let $(L,h)\to (M,\omega)$ be a positive hermitian holomorphic line bundle such that $R(h)=\omega$. Assume there exists $\gamma>0$, such that $R(h)>-\gamma Ric(\omega)$, then on any compact subset $K \subset M$, for the Bergman kernel on the diagonal, we have:
\begin{equation}B_{h^N}(z,z)=\sum_{j}\|s_{j}(z)\|_{h^{N}}^{2}=N^{m}(1+a_{1}(z)N^{-1}+a_{2}(z)N^{-2}+\cdots ) \label{CX}\end{equation}
where $a_{n}(z)$ are smooth on $ K$.

In particular, for the complete \kahler toric manifold with the \kahler potential $\phi(|z|^{2})$, on any compact subset, we have: \begin{equation}B_{h^N}(z,w)= e^{N(\phi(z \cdot \bar{w})-\frac{1}{2 }
(\phi(|z|^{2})+\phi(|w|^{2})))}A_{N}(z,w) \,\, mod N^{-\infty}
\label{FUNN}\end{equation}
where $\phi(z\cdot \bar{w })$ is the almost analytic extension of $\phi(|z|^{2})$ and $A_{N}(z,w)= N^{m}(1+a_{1}(z,w)N^{-1}+\cdots)$ a semi-classical symbol of order $m$.
\end{theo}

\emph{Sketch of the proof}:  The proof follows the one in \cite{BBS}. Locally, we fix our small coordinate neighborhood to be the unit ball $B^m$ of $\C^m$, $u$ is a holomorphic function on $B^m$, we can define the local expression of the norm of a section of $L^N$ over $B^m$ as: $$\|u\|^2_{h^N, loc}=\int_{B^m}|u|^2e^{-N\phi}\frac{\omega^m}{m!}$$In Proposition 2.7 of \cite{BBS}, it's proved that: for any positive integer $k$, there exists a local asymptotic Bergman kernel: \begin{equation}B_{h^N,loc}^k=N^m(1 + a_{1}(z,\bar w)N^{-1}+...+ a_{k}(z,\bar w)N^{-k}) \label{cxz}\end{equation} where $a_{j}(z,\bar w)$ are defined in a fixed neighborhood of $z$ explicitly given such that:\begin{equation}u(x)=\langle \chi u, B_{h^N,loc}^k \rangle_{h^N,loc}+O(N^{-k-1})\|u\|_{h^N,loc}\label{cxzfgh}\end{equation} where $\chi$ is a smooth function supported in the unit ball $B^m$ and equal to one on the ball of radius 1/2. This estimate is a local property, it will be automatically true for the noncompact \kahler manifold.

Next, globally, they show that there exists a uniform $\delta>0$, whenever $d(z,w)<\delta$, we have:
\begin{equation}B_{h^N}(z,w)=B_{h^N,loc}^{k}(z,w)+O(N^{m-k-1})\label{ct}\end{equation} This estimate imply that the difference between the local Bergman kernel and the global one is up to a small error term. Thus we can get the complete asymptotics of $B_{h^N}(z,w)$ by the complete asymptotics of $B_{h^N,loc}^{k}(z,w)$.

In our noncompact case, if we can prove (\ref{ct}) for any fixed compact subset $K$, then Theorem \ref{A} follows. In order to prove (\ref{ct}), first choose $\chi$ as the cut-off function equal to $1$ in a neighborhood of $z$ which is large enough to contain $w$. In the estimate (\ref{cxzfgh}), let's choose $u=B_{h^N}(z,w)$, then we have $$B_{h^N}(z,w)=\langle\chi B_{h^N}, B_{h^N,loc}^k \rangle_{h^ N}+O(N^{-k-1}\|B_{h^N}\|) \label{cvbnmt}$$ By Theorem 2.1 in \cite{Be}, there exist a constant $C_K$ which depends on $K$ such that we have upper bound for the Bergman kernel $$ B_{h^N}(z,w)\leq C_KN^m$$ then
\begin{equation}B_{h^N}(z,w)=\langle\chi B_{h^N}, B_{h^N,loc}^k \rangle_{h^ N}+O(N^{m-k-1})\label{cvbnmt}\end{equation} Next let's estimate \begin{equation}v_{N}(z,w)=\chi B_{h^N,loc}^{k}(z,w)- \langle \chi B_{h^N,loc}^{k},B_{h^N}\rangle_{h^N}\end{equation}  Since the inner product is the Bergman projection, $v_{N}(z,w)$ will be the
$L^2$-minimal solution to the $\bar \partial$-equation $$\bar \partial v_{N}= \bar \partial (\chi B_{h^N,loc}^{k})$$ Because of the curvature assumption $R(h)>-\gamma Ric(\omega)$, we can apply the H\"{o}mander $L^2$-$\bar{\partial}$ estimate (Theorem 5.1 in \cite{D}). Then we have the square norm estimate $$\|v_{N}(z,w)\|_{h^N}^2 \leq O(k^{-j})$$ for any positive integer $j$. Then by Cauchy integral formula in a ball around $z$ of radius $1/N^{1/2}$, we have the pointwise estimate $$|v_{N}(z,w)|^2 \leq O(k^{-j})$$ Thus $$B_{h^N}=B_{h^N,loc}^k -v_{N}+O(N^{m-k-1})=B_{h^N,loc}^k +O(N^{m-k-1})$$Then the theorem follows the complete asymptotics of $B_{h^N,loc}^k$.

Finally, refer to \cite{SZ} for the proof of the last statement about the Bergman kernel off diagonal over the \kahler toric manifold.

\begin{remark} Note that, in the proof we have to restrict the Bergman kernel to any compact subset to ensure the existence of the uniform constant $C_K$ and we also need the curvature assumption to ensure H\"{o}mander $L^2$-$\bar{\partial}$ estimate.
\end{remark}

\section{\label{F} Bargmann-Fock space and Classical Szasz Analytic Functions }
In this section, we first discuss some basic properties of the Bargmann-Fock space, then relate the classical Szasz analytic function with it.
\subsection{ Bargmann-Fock space}\label{bf} The Bargmann-Fock  space is the space of entire functions on $\mathbb{C}^{m}$ which are $L^{2}$ integral with respect to the Bargmann-Fock metric, i.e., $\mathcal{H}^{2}(\mathbb{C}^{m},\pi^{-m} e^{-N\|z\|^{2}}dz d\bar{z})$.
This space is a  Hilbert space and the orthonormal basis is given by monomials \cite{SZ}:
$$\left\{\frac{z^{\alpha}}{\sqrt{\frac{\alpha!}{N^{m+|\alpha|}}}},\alpha \in \mathbb{Z}_{+}^{m}\right\}$$ where we denote $|\alpha|=|\alpha_1|+\cdots+|\alpha_m|$. Then it's easy to get the Bergman kernel on the diagonal for the Bargmann-Fock space
$$\begin{array}{lll} B_{h_{BF}^N}(z,z)&=&\sum _{\alpha \in \Z^m_+}\frac{z^{\alpha}\bar z^{\alpha}}{\|z^{\alpha}\|^{2}_{h_{BF}^{N}}} e^{ -N\|z\|^{2}}= \sum _{\alpha\in \Z^m_+}\frac{z^{\alpha}\bar z^{\alpha}}{\frac{\alpha!}{N^{m+|\alpha|}}} e^{ -N\|z\|^{2}}=N^{m} \label{UIO}\end{array}$$
In the Bargmann-Fock space, the K\"{a}hler  potential for the \kahler manifold $\C^m$ is \begin{equation}\varphi(|z|^2) =\|z\|^{2}=|z_1|^2+\cdots+|z_m|^2 \end{equation} i.e., $\phi(\rho)= e^{\rho_1}+\cdots + e^{\rho_m}$.
Thus the moment map is
\begin{equation}\mu_{h_{BF}} (z_{1}, \ldots, z_{m})= \nabla_{\rho}\phi(\rho)= (|z_{1}|^{2}, \ldots, |z_{m}|^{2})\label{RTYE}\end{equation}
In view of this moment map, the moment polyhedral set for the \kahler toric manifold $\C^m$ with the Euclidean metric is the orthant  $\mathbb{R}^{m}_{+}$.

The symplectic potential defined on $\mathbb{R}^{m}_{+}$ which is the Legendre dual of $\varphi(\rho)$ with respect to the variable $\rho$ is given by\begin{equation}u_{BF}(x)=\sum_{j=1}^{m} x_{j}\log x_{j}-\sum_{j=1}^{m} x_{j}\label{NM}\end{equation}where $x=(x_1,\ldots, x_m) \in \mathbb{R}^{m}_{+}$.
\subsection{Two relations} In this subsection, we get two relations between the Bargmann-Fock space and the classical Szasz analytic function.

First if we apply the pseudo-differential operator $f(N^{-1}D_{\theta})$ to the Bergman kernel off diagonal for the Bargmann-Fock space, we have
\begin{lem}\label{fgh}Assume $f$ is a smooth Schwartz function on $\R^m_+$, then the classical Sazsz analytic function can be expressed in term of the Bergman kernel as:
\begin{equation}S_{h_{BF}^N}(f)(x)=\frac{1}{B_{h_{BF}^N}(z,z)}f(N^{-1}D_{\theta})B_{h^N_{BF}}(e^{i \theta}z,z)|_{\theta=0,\,\,z=\mu^{-1}_{h_{BF}}(x)}\label{poiuytre}
\end{equation}
\end{lem}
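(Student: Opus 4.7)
The identity should come from a direct computation using the explicit monomial basis for the Bargmann-Fock space; the only subtle point is justifying the termwise action of the pseudo-differential operator on the series.

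First I would write out $B_{h^N_{BF}}(e^{i\theta}z,z)$ using formula (\ref{ddsdgdsg}) together with the Bargmann-Fock data $\phi(|z|^2)=\|z\|^2$ and $\|z^\alpha\|_{h_{BF}^N}^2 = \alpha!/N^{m+|\alpha|}$. Writing $(e^{i\theta}z)^\alpha = e^{i\langle\alpha,\theta\rangle}z^\alpha$, this yields
\begin{equation*}
B_{h^N_{BF}}(e^{i\theta}z,z) \;=\; N^m\,e^{-N\|z\|^2}\sum_{\alpha\in\Z_+^m}\frac{(N|z|^2)^\alpha}{\alpha!}\,e^{i\langle\alpha,\theta\rangle}.
\end{equation*}
Since $f$ is Schwartz, the series and its $D_\theta$-derivatives converge absolutely and uniformly on compact subsets of $\theta$, so one may apply the spectral formula (\ref{laghdhs}) termwise. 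Each exponential $e^{i\langle\alpha,\theta\rangle}$ is an eigenfunction with eigenvalue $f(\alpha/N)$, hence
\begin{equation*}
f(N^{-1}D_\theta)B_{h^N_{BF}}(e^{i\theta}z,z) \;=\; N^m\,e^{-N\|z\|^2}\sum_{\alpha\in\Z_+^m} f(\alpha/N)\,\frac{(N|z|^2)^\alpha}{\alpha!}\,e^{i\langle\alpha,\theta\rangle}.
\end{equation*}

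Next I would set $\theta=0$ to kill the phases, divide by $B_{h^N_{BF}}(z,z)=N^m$, and substitute $z=\mu_{h_{BF}}^{-1}(x)$. By (\ref{RTYE}) this substitution gives $|z_j|^2 = x_j$ and $\|z\|^2=\|x\|$, so $(N|z|^2)^\alpha = (Nx)^\alpha$. The right-hand side of (\ref{poiuytre}) then collapses to
\begin{equation*}
e^{-N\|x\|}\sum_{\alpha\in\Z_+^m}f(\alpha/N)\,\frac{(Nx)^\alpha}{\alpha!},
\end{equation*}
which is precisely $S_{h^N_{BF}}(f)(x)$ as defined in (\ref{SZASZ}).

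The main obstacle, such as it is, is the justification of the interchange of the pseudo-differential operator with the infinite sum. I would handle this either by truncating the sum to $|\alpha|\leq R$, applying $f(N^{-1}D_\theta)$ term-by-term on the finite partial sum (where the action is manifestly by multiplication by $f(\alpha/N)$), and passing to the limit using the Schwartz decay of $f$ against the factorial decay of $(N|z|^2)^\alpha/\alpha!$ on compact sets in $z$; or, equivalently, noting that the Fourier series in $\theta$ has absolutely summable coefficients even after the substitution $f(\alpha/N)$. Everything else is bookkeeping with the explicit Bargmann-Fock data.
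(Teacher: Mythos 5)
Your proposal is correct and follows essentially the same route as the paper: expand $B_{h^N_{BF}}(e^{i\theta}z,z)$ in the monomial basis, let $f(N^{-1}D_\theta)$ act so each $e^{i\langle\alpha,\theta\rangle}$ picks up the factor $f(\alpha/N)$, set $\theta=0$, divide by $B_{h^N_{BF}}(z,z)=N^m$, and substitute $|z_j|^2=x_j$ via the moment map. The only cosmetic difference is that the paper routes the action of the operator through its Fourier-integral representation $f(N^{-1}D_\theta)\psi(e^{i\theta}w)|_{\theta=0}=\int\hat f(\xi)\psi(e^{i\xi/N}w)\,d\xi$ and then uses Fourier inversion, whereas you apply the eigenfunction relation termwise and justify the interchange by absolute convergence; these are the same computation.
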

\begin{proof}The key is to rewrite the Bergman kernel off the diagonal as
\begin{equation}\label{poi}\begin{array}{lll} B_{h_{BF}^N}(e^{i\theta}z,z)&=&\sum _{\alpha\in \mathbb{Z}_+^m}e^{i\langle\alpha,\theta\rangle}\frac{z^{\alpha}\bar z^{\alpha}}{\|z^{\alpha}\|^{2}_{h_{BF}^{N}}} e^{ -N\|z\|^{2}}=e^{ -N\|z\|^{2}}\sum _{\alpha \in \mathbb{Z}_+^m}e^{i\langle\alpha,\theta\rangle}\frac{z^{\alpha}\bar z^{\alpha}}{\frac{\alpha!}{N^{m+|\alpha|}}}  \\ && \\
& = &N^{m}e^{-N\|x\|}\sum _{\alpha \in \mathbb{Z}_+^m}\frac{(Nx)^{\alpha}}{\alpha!}e^{i\langle\theta,\alpha\rangle},\,\, \mbox{when}\, x=\mu_{h_{BF}}(z)\end{array}\end{equation}
About the operator $f(N^{-1}D_{\theta})$ which is defined by identity (\ref{laghdhs}), we have the following general formula:
\begin{equation} \label{general}f(N^{-1}D_{\theta}) \psi(e^{i\theta}w)|_{\theta=0} = \int_{\R^{m}}\hat{f}(\xi)\psi(e^{i\frac{\xi}{N}}w)d\xi \end{equation}for any smooth function $\psi$.

Now we apply the operator $f(N^{-1}D_{\theta})$ on both sides of (\ref{poi}) and use the fact $B_{h_{BF}^N}(z,z)=N^{m}$, then we complete the proof by
$$ \begin{array}{lll} && \frac{1}{B_{h_{BF}^N}(z,z)}f(N^{-1}D_{\theta})B_{h^N_{BF}}(e^{i \theta}z,z)|_{\theta=0,\,\,z=\mu^{-1}_{h_{BF}}(x)} \\ && \\
& = & e^{-N\|x\|}\sum _{\alpha \in \mathbb{Z}_+^m}\frac{(Nx)^{\alpha}}{\alpha!}\int_{\R^m}\hat f(\xi) e^{i\langle \xi,N^{-1}\alpha\rangle}d\xi= e^{-N\|x\|}\sum _{\alpha \in \mathbb{Z}_+^m}f(\frac{\alpha}{N})\frac{(Nx)^{\alpha}}{\alpha!} \end{array}$$
\end{proof}
 We can also get another relation:
\begin{lem}\label{TUY}The classical Szasz analytic function can also be expressed by the symplectic potential of the Bargmann-Fock space as
\begin{equation} \begin{array}{lll}S_{h_{BF}^N}(f)(x) & = & \frac{1}{B_{h_{BF}^N}(z,z)}\sum_{\alpha\in \Z^m_+}f(\frac{\alpha}{N})\frac{e^{N(u_{BF}(x)+\langle\frac{\alpha}{N}-x,\nabla u_{BF}(x)\rangle)}}{\|z^{\alpha}\|^{2}_{h_{BF}^{N}}}|_{z=\mu_{h_{BF}} ^{-1}(x)}
\end{array}  \end{equation} where $\mu_{h_{BF}} (z)$ is the moment map, $B_{h_{BF}^N}(z,z)$ is the Bergman kernel for the Bargmann-Fock space, $u_{BF}(x)$ is the symplectic potential over the orthant $\mathbb{R}^{m}_{+}$.
\end{lem}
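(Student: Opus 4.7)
The plan is to prove Lemma \ref{TUY} by a direct computation: every quantity on the right-hand side is explicit for the Bargmann-Fock model, and the identity reduces to an algebraic manipulation showing that the symplectic-potential expression collapses to the classical Szasz formula $\eqref{SZASZ}$.

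First I would unpack the data for the Bargmann-Fock space. From $\eqref{NM}$, the symplectic potential is $u_{BF}(x)=\sum_{j}x_{j}\log x_{j}-\sum_{j}x_{j}$, so $\nabla u_{BF}(x)=(\log x_{1},\ldots,\log x_{m})$. I would then compute
\begin{equation*}
N\bigl(u_{BF}(x)+\langle\tfrac{\alpha}{N}-x,\nabla u_{BF}(x)\rangle\bigr)
=N\sum_{j}x_{j}\log x_{j}-N\sum_{j}x_{j}+\sum_{j}(\alpha_{j}-Nx_{j})\log x_{j},
\end{equation*}
and observe that the $Nx_{j}\log x_{j}$ terms cancel, leaving $\sum_{j}\alpha_{j}\log x_{j}-N\|x\|=\log x^{\alpha}-N\|x\|$. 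Exponentiating gives $e^{N(u_{BF}(x)+\langle\alpha/N-x,\nabla u_{BF}(x)\rangle)}=x^{\alpha}e^{-N\|x\|}$.

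Next I would substitute the Bargmann-Fock data already recorded in subsection \ref{bf}: namely $B_{h_{BF}^{N}}(z,z)=N^{m}$ and $\|z^{\alpha}\|_{h_{BF}^{N}}^{2}=\alpha!/N^{m+|\alpha|}$. Plugging these and the exponential computed above into the right-hand side of the lemma yields
\begin{equation*}
\frac{1}{N^{m}}\sum_{\alpha\in\Z_{+}^{m}}f\bigl(\tfrac{\alpha}{N}\bigr)\,\frac{x^{\alpha}e^{-N\|x\|}}{\alpha!/N^{m+|\alpha|}}
=e^{-N\|x\|}\sum_{\alpha\in\Z_{+}^{m}}f\bigl(\tfrac{\alpha}{N}\bigr)\frac{(Nx)^{\alpha}}{\alpha!},
\end{equation*}
which is exactly $S_{h_{BF}^{N}}(f)(x)$ by \eqref{SZASZ}. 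Since the moment map $\mu_{h_{BF}}(z)=(|z_{1}|^{2},\ldots,|z_{m}|^{2})$ was used in \eqref{SZASZ} to identify $x$ with $|z|^{2}$, setting $z=\mu_{h_{BF}}^{-1}(x)$ is consistent on both sides.

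There is no real obstacle here: the lemma is essentially a bookkeeping identity, and the only thing to verify carefully is the cancellation in the exponent $N(u_{BF}+\langle\alpha/N-x,\nabla u_{BF}\rangle)$, which telescopes to $\log x^{\alpha}-N\|x\|$ precisely because $u_{BF}$ is homogeneous-like (Legendre dual of $e^{\rho_{1}}+\cdots+e^{\rho_{m}}$). The importance of the lemma is conceptual rather than technical: it rewrites the Szasz kernel in a form $e^{N(u+\langle\alpha/N-x,\nabla u\rangle)}/\|z^{\alpha}\|_{h^{N}}^{2}$ that makes sense verbatim for any \kahler toric manifold, providing the template for Definition \ref{hgfd}.
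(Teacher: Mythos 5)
Your proof is correct and follows essentially the same route as the paper: verify term by term that $e^{N(u_{BF}(x)+\langle\alpha/N-x,\nabla u_{BF}(x)\rangle)}$ collapses (here to $x^{\alpha}e^{-N\|x\|}$), then insert $B_{h_{BF}^{N}}(z,z)=N^{m}$ and $\|z^{\alpha}\|^{2}_{h_{BF}^{N}}=\alpha!/N^{m+|\alpha|}$ and resum to recover \eqref{SZASZ}; the paper merely labels the exponent cancellation ``a simple calculation'' where you carry it out explicitly. Your explicit computation is in fact the cleaner version, since the paper's intermediate display writes $(Nx)^{\alpha}$ where the factor should be $x^{\alpha}$ (the powers of $N$ being absorbed by $\|z^{\alpha}\|^{-2}_{h_{BF}^{N}}$ and $B_{h_{BF}^{N}}(z,z)^{-1}$), a harmless slip that your bookkeeping avoids.
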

\begin{proof}A simple calculation shows that the Szasz terms may be expressed
in terms of the symplectic potential as $$\frac{ e^{-N\|x\|}(Nx)^{\alpha}}{\frac {\alpha !}{N^{m+|\alpha|}}}=\frac{e^{N(u_{BF}(x)+\langle\frac{\alpha}{N}-x,\nabla u_{BF}(x)\rangle)}}{\|z^{\alpha}\|^{2}_{h_{BF}^{N}}}\,\,\, ,z=\mu_{h_{BF}} ^{-1}(x)$$Thus
$$ \begin{array}{lll}S_{h_{BF}^N}(f)&=& e^{-N\|x\|}\sum _{\alpha \in \mathbb{Z}_+^m}\frac{(Nx)^{\alpha}}{\alpha!}f(\frac{\alpha}{N})= N^{-m}\sum _{\alpha \in \mathbb{Z}_+^m}f(\frac{\alpha}{N})\frac{ e^{-N\|x\|}(Nx)^{\alpha}}{\frac {\alpha !}{N^{m+|\alpha|}}} \\ && \\
& = &\frac{1}{B_{h_{BF}^N}(z,z)}\sum_{\alpha \in \Z^m_+}f(\frac{\alpha}{N})\frac{e^{N(u_{BF}(x)+\langle\frac{\alpha}{N}-x,\nabla u_{BF}(x)\rangle)}}{\|z^{\alpha}\|^{2}_{h_{BF}^{N}}}
\end{array}  $$\end{proof}

\section{\label{djghsfdf} Generalized Szasz Analytic functions}

\subsection{Definition} We now generalize the two relations we get in the last section to any polarized \kahler toric manifold to define the generalized Szasz analytic function. Following notations and assumptions in section \ref{fhdshd}, we have

\begin{defin}\label{hgfd}Let $f \in C_{0}^{\infty}(\R^m)$, we define the generalized Szasz analytic function of $f(x)$ by the symplectic potential as:
$$\begin{array}{lll}\label{djgh}S_{h^N}(f)(x)&=&\frac{1}{B_{h^{N}}(z,z)}\mathcal{N}_{h^{N}}f(x) \,\,\mbox{where}
\\ && \\ \mathcal{N}_{h^{N}}f(x) &=&\sum_{\alpha\in \Z^m \cap NP}f(\frac{\alpha}{N})\frac{e^{N(u_{\phi}(x)+\langle\frac{\alpha}{N}-x,\nabla u_{\phi}(x)\rangle)}}{\|z^{\alpha}\|^{2}_{h^{N}}} \label{dddss}
\end{array}$$ %provided $ H^{0}_{L^2}(M,L^{N}) = \bigoplus_{\alpha \in NP\cap \mathbb{Z}^m } \mathbb{C}  \cdot z^{\alpha}$
where $z$ and $x$ are related by the moment map $z=\mu_{h}^{-1}(x)$.\end{defin}

The following proposition says that this definition also generalizes Lemma \ref{fgh} to any noncompact toric \kahler manifold:
\begin{prop} \label{SBERG}$S_{h^N}(f)$ is the generalized Szasz analytic functions defined above, then
\begin{equation}S_{h^N}(f)(x)=\frac{1}{B_{h^N}(z,z)}f(N^{-1}D_{\theta})B_{h^N}(e^{i \theta}z,z)|_{\theta=0,\,z=\mu_{h}^{-1}(x)}
\end{equation}
\end{prop}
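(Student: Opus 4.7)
The plan is to compute the right-hand side directly from the formula for the Bergman kernel on a toric manifold and then identify it with $\mathcal{N}_{h^N}f(x)/B_{h^N}(z,z)$ via Legendre duality, mimicking the argument in Lemma \ref{fgh} but in the general setting where the \kahler potential is no longer quadratic.

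First I would substitute $w = e^{i\theta}z$ into the toric Bergman kernel formula \rfe{ddsdgdsg}. Because the \kahler potential depends only on $|z|^2$ and $|e^{i\theta}z|^2 = |z|^2$, the exponential factors recombine into $e^{-N\phi(|z|^2)}$, and the monomials satisfy $(e^{i\theta}z)^\alpha = e^{i\langle\alpha,\theta\rangle}z^\alpha$. This yields
\[
B_{h^N}(e^{i\theta}z,z) \;=\; \sum_{\alpha\in NP\cap\Z^m} e^{i\langle\alpha,\theta\rangle}\,\frac{|z^\alpha|^2\,e^{-N\phi(|z|^2)}}{\|z^\alpha\|^2_{h^N}}.
\]

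Next I would apply the operator $f(N^{-1}D_\theta)$. Since $f\in C_0^\infty(\R^m)$, only finitely many $\alpha/N$ lie in $\supp(f)$; this compact support lets us interchange the operator with the sum and apply the spectral prescription \rfe{laghdhs} term by term, so each exponential $e^{i\langle\alpha,\theta\rangle}$ picks up the scalar factor $f(\alpha/N)$. Setting $\theta = 0$ afterwards kills the phases, producing
\[
f(N^{-1}D_\theta)B_{h^N}(e^{i\theta}z,z)\big|_{\theta=0}
\;=\; \sum_{\alpha\in NP\cap\Z^m} f\!\left(\tfrac{\alpha}{N}\right)\frac{|z^\alpha|^2\,e^{-N\phi(|z|^2)}}{\|z^\alpha\|^2_{h^N}}.
\]

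The final step is to recognize, at $z=\mu_h^{-1}(x)$, the identity
\[
|z^\alpha|^2\,e^{-N\phi(|z|^2)} \;=\; e^{N\bigl(u_\phi(x)+\langle \frac{\alpha}{N}-x,\,\nabla u_\phi(x)\rangle\bigr)}.
\]
To verify this I would pass to logarithmic coordinates $|z_j|^2 = e^{\rho_j}$, so that the left side becomes $e^{\langle\alpha,\rho\rangle - N\phi(\rho)}$. The Legendre duality \rfe{dualone} gives $\rho = \nabla_x u_\phi(x)$ together with $u_\phi(x) = \langle x,\rho\rangle - \phi(\rho)$, so
\[
N u_\phi(x) + \langle \alpha - Nx,\,\nabla u_\phi(x)\rangle
= \langle \alpha,\rho\rangle + N\bigl(u_\phi(x) - \langle x,\rho\rangle\bigr)
= \langle\alpha,\rho\rangle - N\phi(\rho),
\]
exactly matching the exponent on the left. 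Dividing by $B_{h^N}(z,z)$ then gives the stated formula in terms of $\mathcal{N}_{h^N}f(x)$ from Definition \ref{hgfd}.

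The only subtle point is justifying the termwise action of $f(N^{-1}D_\theta)$ on the infinite series, which I expect to be the main (but minor) obstacle; however, since $f$ has compact support only finitely many lattice points $\alpha$ contribute to $f(N^{-1}D_\theta)$ on the Fourier side, so this reduces to a finite sum and no convergence issue arises. Everything else is a direct substitution using Legendre duality.
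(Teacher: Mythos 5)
Your proposal is correct and follows essentially the same route as the paper: substitute $w=e^{i\theta}z$ into the toric Bergman kernel expansion \rfe{ddsdgdsg}, let $f(N^{-1}D_\theta)$ act term by term so each phase $e^{i\langle\alpha,\theta\rangle}$ contributes the factor $f(\alpha/N)$ (the paper does this via the Fourier-integral formula \rfe{general}, but that is the same computation), and then convert each term to the symplectic-potential form using exactly the Legendre-duality identities $\rho=\nabla_x u_\phi(x)$, $\phi(\rho)=\langle x,\rho\rangle-u_\phi(x)$ before dividing by $B_{h^N}(z,z)$. No essential difference from the paper's argument.
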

\begin{proof}
 By identity (\ref{ddsdgdsg}), we have
\begin{equation}\label{off}B_{h^N}(e^{i\theta}z,z)=\sum _{\alpha\in \mathbb{Z}^m \cap NP}\frac{z^{\alpha}\bar z^{\alpha}e^{-N\phi(|z|^2)}e^{i\langle\alpha,\theta\rangle}}{\|z^{\alpha}\|_{h^{N}}^{2}}\end{equation}
Each term in the Bergman kernel can be expressed in term of symplectic potential as
\begin{equation}\label{ddsdgdsdddg}\frac{z^{\alpha}\bar z^{\alpha}e^{-N\phi(|z|^2)}}{\|z^{\alpha}\|_{h^{N}}^{2}}=\frac{e^{N(u_{\phi}(x)+\langle\frac{\alpha}{N}-x,\nabla u_{\phi}(x)\rangle)}}{\|z^{\alpha}\|^{2}_{h^{N}}}\,, \mbox{when} \, \mu_{h}(z)=x\end{equation}
this follows from the pair of identities,
$$z^{\alpha}\bar z^{\alpha}=e^{\langle\alpha, \nabla u_{\phi}(x)\rangle}, \,\, \phi(|z|^2)=\langle x, \nabla u_{\phi}(x)\rangle-u_{\phi}(x)\,, \mbox{when} \, \mu_{h}(z)=x$$
Now apply the general formula (\ref{general}) to (\ref{off}), we have
\begin{equation}\label{dfddvdsdgdsdddg}\begin{array}{lll}f(N^{-1}D_{\theta})B_{h^N}(e^{i \theta}z,z)|_{\theta=0,z=\mu_{h}^{-1}(x)}&=& \left(\int_{\R^{m}}\hat{f}(\xi)B_{h^N}(e^{i \frac{\xi}{N} }z,z)d\xi\right) |_{z=\mu_{h}^{-1}(x)}\\ && \\
& = &
\sum_{\alpha\in \mathbb{Z}^m \cap NP}\left(\frac{z^{\alpha}\bar z^{\alpha}e^{-N\phi(|z|^{2})}}{\|z^{\alpha}\|_{h^{N}}^{2}} \int_{\R^{m}}\hat{f}(\xi)e^{i\langle \frac{\alpha}{N},\xi\rangle}d\xi\right)|_{z=\mu_{h}^{-1}(x)}\\ && \\
& = &\left(\sum _{\alpha\in \mathbb{Z}^m \cap NP} f(\frac{\alpha}{N})\frac{z^{\alpha}\bar z^{\alpha}e^{-N\phi(|z|^{2})}}{\|z^{\alpha}\|_{h^{N}}^{2}}\right)|_{z=\mu_{h}^{-1}(x)}\\ && \\
& = &\mathcal{N}_{h^{N}}f(x) \end{array}
\end{equation} in the last step, we use the identity (\ref{ddsdgdsdddg}). Thus we complete the proof by dividing $B_{h^N}(z,z)$ on both sides.
\end{proof}

%\begin{remark}\label{rem1} By identity (\ref{ddsdgdsdddg}), we have the following equivalent expression of the generalized Szasz analytic functions, $$S_{h^N}(f)(x)=\frac{1}{B_{h^{N}}(z,z)}\sum_{\alpha\in \Z^m \cap NP}f(\frac{\alpha}{N})\frac{|z^\alpha|^2}{\|z^\alpha\|^2_{h^N}}$$ which means at each lattice point $\frac{\alpha}{N} $, we associate it with a probability measure $$u_N^z(\alpha):=\frac{1}{B_{h^{N}}(z,z)}\frac{|z^\alpha|^2}{\|z^\alpha\|^2_{h^N}}\delta_{\frac{\alpha}{N}} $$ thus $S_{h^N}(f)(x)$ can be viewed as the the expection of some random variables.  We will go back to this point of view by examples in section \ref{example}.
%\end{remark}
\subsection{Complete asymptotics } Let's turn to the proof of Theorem \ref{BBa} which says that $S_{h^N}(f)$ admits complete asymptotics.
\begin{proof}  First claim: there exist differential operators $\mathcal{N}_j$ such that  \begin{equation} \mathcal{N}_{h^{N}}f(x)=N^m f(x)+N^{m-1}\mathcal{N}_1f(x)+\cdots \end{equation} where $\mathcal{N}_j$ are computable from the complete asymptotics of $B_{h^N}(z,z)$.

To prove the claim, we denote $K$ as the compact support of $f(x)$, thus $ \mu_h^{-1}(K)$ will be a compact subset of $M$ since $\mu_h$ is proper. Thus by formula (\ref{FUNN}), on $ \mu_h^{-1}(K)$ we have: $$B_{h^N}(z,w)= e^{N(\phi(z \cdot \bar{w})-\frac{1}{2 }
(\phi(|z|^{2})+\phi(|w|^{2})))}A_{N}(z,w) \,\, mod N^{-\infty} $$ where $\phi(z\cdot\bar w)$ is the holomorphic extension of $\phi(|z|^2)$ and $A_{N}(z,w)$  admits a complete asymptotics.  Let's choose $w=e^{iN^{-1}\xi}z$, then
\begin{equation}\label{dgcvfsag}B_{h^N}(e^{iN^{-1}\xi}z,z)= e^{N(\phi(e^{iN^{-1}\xi}|z|^{2})-
\phi(|z|^{2}))}A_{N}(ze^{iN^{-1}\xi},z)\,\, mod N^{-\infty}\end{equation}
where $e^{iN^{-1}\xi}|z|^{2}$ is the vector $(e^{iN^{-1}\xi_1}|z_1|^{2},\ldots , e^{iN^{-1}\xi_m}|z|^{m})$.

Plug (\ref{dgcvfsag}) into the first line of equation (\ref{dfddvdsdgdsdddg}), we have following crucial expression:
\begin{equation}\label{dgfsag}\begin{array}{lll} \mathcal{N}_{h^{N}}f(x)&=&\int_{\R^{m}}\hat{f}(\xi)e^{i\langle \xi,N^{-1}D_{\theta} \rangle}B_{h^N}(e^{i \theta}z,z)|_{\theta=0, z=\mu_h^{-1}(x)}d\xi \\ && \\
& = & \left(\int_{\R^m}\hat{f}(\xi)B_{h^N}(e^{iN^{-1}\xi}z,z)d\xi \right)|_{z=\mu_h^{-1}(x)}\\ && \\
& = &\left(\int_{\R^{m}}\hat{f}(\xi) e^{N(\phi(e^{iN^{-1}\xi}|z|^{2})-
\phi(|z|^{2}))}A_{N}(e^{iN^{-1}\xi}z,z)d\xi\right)|_{z=\mu_h^{-1}(x)}
\end{array}\end{equation}  To the phase of this integral, following the one in \cite{Z1}, we have
\begin{equation}\label{dgdffsag}\begin{array}{lll}\phi(e^{iN^{-1}\xi} |z|^{2})-\phi(|z|^{2})&=&\int_{0}^{1}\frac{d}{dt}\phi(e^{itN^{-1}\xi } |z|^{2})dt\\ && \\
& = &iN^{-1}\int_{0}^{1}\langle\nabla_{\rho}\phi(e^{itN^{-1}\xi+\rho}),\xi\rangle\ dt\\ && \\
& = &iN^{-1}\langle\nabla_{\rho}\phi(e^{\rho}),\xi\rangle+\frac{1}{2}(iN)^{-2}\int_{0}^{1}(t-1)^{2}\nabla_{\rho}^{2}\phi(e^{itN^{-1}\xi+\rho}){\xi}^{2} dt
\\ && \\
& = &iN^{-1}\langle \mu_h(z),\xi\rangle+\frac{1}{2}(iN)^{-2}\nabla_{\rho}^{2}(\phi(e^{\rho}))\xi^{2}+O(N^{-3})\\ && \\
& = &iN^{-1}\langle\mu_h(z),\xi\rangle-\frac{1}{2}N^{-2}\langle H_{\phi}(\rho)\xi,\xi\rangle +O(N^{-3})
\end{array}\end{equation} where $H_{\phi}(\rho)=\nabla_{\rho}^{2}(\phi(\rho))$ is the Hessian of the \kahler potential with respect to $\rho$ variables. Thus we can rewrite (\ref{dgfsag}) as the following estimate, $$\mathcal{N}_{h^{N}}f(x)= \int_{\R^{m}}\hat{f}(\xi) e^{i\langle\mu_h(z),\xi\rangle}e^{-\frac{1}{2}N^{-1}\langle H_{\phi}(\rho)\xi,\xi\rangle +O(N^{-2})}A_{N}(ze^{iN^{-1}\xi},z)d\xi
$$ Apply the Taylor expansion to the factor $e^{-\frac{1}{2}N^{-1}\langle H_{\phi}(\rho)\xi,\xi\rangle +O(N^{-2})}$, one obtains a new amplitude $\tilde{A}_N$ such that $$\mathcal{N}_{h^{N}}f(x)= \int_{\R^{m}}\hat{f}(\xi) e^{i\langle\mu_h(z),\xi\rangle}\tilde{A}_{N}(ze^{iN^{-1}\xi},z)d\xi
$$ The amplitude $\tilde{A}_N$ has an expansion of the form,
$$\tilde{A}_N(ze^{iN^{-1}\xi},z)=N^m +N^{m-1}a_1+O(N^{m-2})$$
where each coefficient $a_j$ is smooth.

Thus by the inversion formula of the Fourier transformation, we have: $$\mathcal{N}_{h^{N}}f(\mu_h(z))= N^{m}f(\mu_h(z))+N^{m-1}[-\frac{1}{2}\langle H_{\phi}(\rho)D_{x},D_{x}\rangle f(\mu_h(z))+a_{1}(z,z)f(\mu_h(z))]+O(N^{m-2})$$ Thus we get the claim since $\mu_h(z)=x$.

Dividing $\mathcal{N}_{h^{N}}f(x)$ by $B_{h^N}(z,z)$ and using the complete asymptotics of $B_{h^N}(z,z)$ completes the proof of Theorem \ref{BBa}.

\end{proof} \subsection{\label{AA}Scaling asymptotics }
\subsubsection{Proof of Theorem \ref{ghfhgg} }We now turn to the proof of Theorem \ref{ghfhgg}.
\begin{proof}%We now consider $D_{\frac{1}{N}} S_{h^N}D_{\frac{1}{N}}$ to prove complete asymptotics (\ref{corner}).
Now pick up a vertex $V$ of $P$, i.e., the image of a fixed point of $\T^m$ action on $M$ under the
moment map. We use the affine
transformation to map $V$ to the origin and all facets that meet at $V$ to the hyperplanes $x_j = 0$.

Let $\mu_h(\tilde z_N) =\frac{ x}
{N}$ , denote $H_\phi(\tilde \rho _N)$ as the Hessian of the \kahler potential evalued at the
point $\tilde \rho_N$ where $|\tilde z_N|^2= e^{\tilde \rho_N}$.

Since $\frac{ x}
{N}$ is near the origin as $N$ large enough, in these new coordinates, we may write the
symplectic potential (\ref{sympotential}) near the origin as (p.42 in \cite{SoZ}):
\begin{equation}u_{\phi}(x)=\sum_{j=1}^mx_j \log x_j +h(x)\label{hx}\end{equation}
where $h(x)$ is a smooth function on $P$.

First, the Hessian of the symplectic potential can be decomposed into the sum, $$G_\phi(x)=\sum_{j=1}^m\frac{1}
{x_j}\delta_{jj}+\nabla^2 h:= M(\frac{1}{x})+\nabla^2 h(x)$$Thus by formula (\ref{dualtwo}) of Legendre duality,
$$H_\phi(\tilde \rho_N)=G_\phi^{-1}(\frac{x}{N})=(M(\frac{N}{x})+\nabla^2 h(\frac{x}{N}))^{-1}$$
Thus, $$H_\phi(\tilde \rho_N)=M(\frac{x}{N})-M(\frac{x}{N})^2\nabla^2 h(\frac{x}{N})+\cdots$$
 Apply the Taylor expansion to the matrix $\nabla^2 h(\frac{x}{N})$ at $0$, we have the complete asymptotics
\begin{equation}\label{matrixexpansion}H_\phi(\tilde \rho_N)=M(\frac{x}{N})-M(\frac{x}{N})^2\nabla^2 h(0)+\cdots\end{equation}
 Second, by formula (\ref{dualone}) of Legendre duality,
 $$\tilde \rho_N=\nabla_x u_\phi(\frac{x}{N})=(\log \frac{x_1}{N}, \ldots, \log \frac{x_m}{N})+\nabla h(\frac{x}{N})+\stackrel{\rightarrow}{1}$$
 Apply Taylor expansion to the vector $\nabla h(\frac{x}{N})$ at $0$, we have
 $$|\tilde z_N|^2=e^{\tilde \rho_N}=( \frac{x_1}{N}e^{1+h_{x_1}(0)+h_{x_1x_1}(0) \frac{x_1}{N}+\cdots},\ldots,\frac{x_m}{N}e^{1+h_{x_m}(0)+h_{x_mx_m}(0) \frac{x_m}{N}+\cdots} )$$
 which implies that $|\tilde z_N|^2$ admits complete expansion by Taylor expansion. For example, for
the $j$-th element, we have
 
 \begin{equation}\label{zexpansion} |\tilde z_{N,j}|^2= (1+h_{x_j}(0))\frac{x_j}{N}+h_{x_jx_j}(0)(\frac{x_j}{N})^2+\cdots\end{equation}
Now denote $f_{N}(x)=D_{\frac{1}{N}}^{-1}f(x)$, then apply identity (\ref{dgfsag}), we have:
%\phi(|\mu_h ^{-1}(\frac{x}{N})|^{2}))}A_{N}(\tilde z,\tilde z e^{iN^{-1}\xi })/B_{h^N}(\tilde z,\tilde z) d\xi  \end{equation}
%where $\tilde z=\mu_h^{-1}(\frac{x}{N})$ .
$$\begin{array}{lll}&&(D_{\frac{1}{N}} S_{h^N}D^{-1}_{\frac{1}{N}})f(x)=D_{\frac{1}{N}} S_{h^N}f_N(x)\\ &&\\ &=&
 D_{\frac{1}{N}}\left( \int_{\R^{m}}\widehat{f_N}(\xi) e^{N(\phi(e^{i\frac{\xi}{N}}|z|^{2})-
\phi(|z|^{2}))}A_{N}(z,ze^{i\frac{\xi}{N}})/B_{h^N}(z, z)d\xi\right), \mbox{where} \,\, z=\mu^{-1}_h(z)\end{array}$$ Use the fact $\widehat{f_N} = N^{-m}  \hat{f}(\xi/N)$ to get 
$$\frac{1}{N^m} D_{\frac{1}{N}}\left (\int_{\R^{m}}\widehat{f}(\frac{\xi}{N}) e^{N(\phi(e^{i\frac{\xi}{N}}|z|^{2})-
\phi(|z|^{2}))}A_{N}(z,ze^{i\frac{\xi}{N}})/B_{h^N}(z, z)d\xi\right), \mbox{where} \,\, z=\mu^{-1}_h(z) $$
Now do the final dilation to get:
\begin{equation}\label{ddd} 
\frac{1}{N^m}\int_{\R^{m}}\widehat{f}(\frac{\xi}{N}) e^{N(\phi(e^{i \frac{\xi}{N} }|\tilde z_N|^{2})-
\phi(|\tilde z_N|^{2}))}A_{N}(\tilde z_N,\tilde z_Ne^{i\frac{\xi}{N}})/B_{h^N}(\tilde z_N,\tilde z_N) d\xi ,\,\, \mbox{where} \,\, \tilde z_N=\mu^{-1}_h(\frac{x}{N})
\end{equation}
%Let $\mu_h(\tilde z)=\frac{x}{N}$, denote $H_{\phi}(\tilde \rho)$ as the Hessian of
%   $\phi(|\tilde z|^2)=\phi(e^{\tilde \rho})$, denote $e^{iN^{-1}\xi} - 1$ as the vector $(e^{iN^{-1}\xi_1} - 1, \cdots, e^{iN^{-1}\xi_ m} - 1)$ and denote $M(x)=diag\{x_1,\ldots,x_m\}$ as the diagonal matrix where $x=(x_1,\cdots,x_m)$.
To the phase in the integral (\ref{ddd}), recall asymptotic expansion (\ref{dgdffsag}), we have
 \begin{equation}\label{dddhghsd}\begin{array}{lll}&&\phi(e^{iN^{-1}\xi }|\tilde z_N|^{2})-
\phi(|\tilde z_N|^{2})\\ &&\\&=&iN^{-1}\langle \mu_h(\tilde z_N),\xi\rangle-\frac{1}{2}N^{-2}\langle H_{\phi}(\tilde\rho_N)\xi,\xi \rangle +O(N^{-3})\\ &&\\ &=&\langle e^{iN^{-1}\xi } - 1,\frac{ x }{N}\rangle +\frac{1}{2}N^{-2}\langle[M(\frac{x}{N})-H_{\phi}(\tilde \rho_N)]\xi,\xi \rangle+ O(N^{-3})\end{array}\end{equation}
  where in the last step, we replace $iN^{-1}\langle \mu_h(\tilde z_N),\xi\rangle$ by the following Taylor expansion: $$\langle e^{iN^{-1}\xi } - 1,\frac{ x }{N}\rangle= iN^{-1} \langle \frac{x}{N}, \eta\rangle-\frac{1}{2}N^{-2}\langle M(\frac{x}{N})\xi,\xi \rangle+O(N^{-2}) $$ %where 
where $e^{iN^{-1}\xi } - 1$ denotes the vector $( e^{iN^{-1}\xi_1 } - 1, \ldots, e^{iN^{-1}\xi_m } - 1)$.
 
Thus, we can rewrite (\ref{ddd}) as $$
 \frac{1}{N^m}\int_{\R^{m}}\widehat{f}(\frac{\xi}{N}) e^{\langle e^{i N^{-1}\xi} - 1, x \rangle}e^{\frac{1}{2}N^{-1}\langle[M(\frac{x}{N})-H_{\phi}(\tilde \rho_N)]\xi,\xi \rangle+O(N^{-2})} A_{N}(\tilde z_N,\tilde z_N e^{i \frac{\xi}{N}})/B_{h^N}(\tilde z_N, \tilde z_N) d \xi\label{ssss}$$% where  $\tilde{A}_N=A_N \cdot e^{ O(N^{-1})}$.
Now change variables to $\eta=\xi/N$ to get
\begin{equation}
\int_{\R^{m}}\widehat{f}(\eta) e^{\langle e^{i\eta} - 1, x \rangle}e^{\frac{N}{2}\langle[M(\frac{x}{N})-H_{\phi}(\tilde \rho_N)]\eta,\eta \rangle+O(N^{-2})} A_{N}(\tilde z_N,\tilde z_N e^{i \eta})/B_{h^N}(\tilde z_N, \tilde z_N) d \xi\label{ssss}
 \end{equation}The factor $e^{\frac{N}{2}\langle[M(\frac{x}{N})-H_{\phi}(\tilde \rho_N)]\eta,\eta \rangle+O(N^{-2})}$ admits complete asymptotics. This can be seen from
(\ref{matrixexpansion}) that $M(\frac{x}{N})-H_{\phi}(\tilde \rho_N)$ admits complete asymptotics. And the first two terms are given by:$$e^{\frac{N}{2}\langle[M(\frac{x}{N})-H_{\phi}(\tilde \rho_N)]\eta,\eta \rangle+O(N^{-2})}=1+\frac{1}{2N}\langle M^2(x)\nabla^2 h(0)\eta, \eta \rangle +O(N^{-2})$$
 %Now claim: $N(M(\frac{x}{N})-H_{\phi}(\tilde \rho))=O(N^{-1})$. To prove this, by definition of the moment map and the chain rule, we have: $x/N=\mu_h(\tilde z)=\nabla_{\rho}(\varphi(e^{\tilde \rho}))=\phi'(e^{\tilde \rho})e^{\tilde \rho}=\phi'(|\tilde z|^2)|\tilde z|^2$ and $x=\mu_h(z)=\phi'(|z|^2)|z|^2$. Here $\phi'$ is the $x$-derivative of $\phi$, for example, in the Fubini-Study case, $\phi(|z|^2)=\log(1+e^{\rho_1}+\cdots+e^{\rho_m})$, then denote $\phi(x)=\log(1+x_1+\dots+x_m)$. Thus on the compact subset $\mu_h^{-1}(K)$, the growth of $\tilde z$ is $|\tilde z|^2=O(N^{-1})$. By chain rule again, we have $$H_{\phi}(\tilde \rho)=\phi''(|\tilde z|^2)|\tilde z|^4+M(\mu_h(\tilde z))$$ where $|z|^4$ denotes the matrix $(a_{ij})_{m\times m}$ where $a_{ij}=e^{\rho_{i}+\rho_j}$. Thus the growth of $N(M(\frac{x}{N})-H_{\phi}(\tilde \rho))$ is the same as $N\phi''(|\tilde z|^2)|\tilde z|^4$ which is $O(N^{-1})$ since $|\tilde z|^2=O(N^{-1})$. Thus we can rewrite (\ref{ssss}) as
The factor $A_N/B_{h^N}$ admits complete asymptotics,
$$A_N/B_{h^N} =\frac{ N^m(1+a_1(\tilde z_N, \tilde z_Ne^{iN^{-1}\xi})N^{-1}+\cdots )}
{ N^m(1+a_1(\tilde z_N, \tilde z_N)N^{-1}+\cdots )}$$
In the toric case, by the formula in \cite{BBS}, we know that each coefficient $a_j(z, z)$ must be in
the form of $a_j(|z|^2)$; $a_j(z,w)$ is the almost holomorphic extension of $a_j(z, z)$, which implies
that $a_j(z, e^{iN^{-1}\xi}z)$ is a function of $a_j(e^{iN^{-1}\xi}|z|^2)$. Thus we can rewrite,
$$A_N/B_{h^N} =\frac{ N^m(1+a_1(e^{iN^{-1}\xi}|\tilde z_N|^2)N^{-1}+\cdots )}
{ N^m(1+a_1(|\tilde z_N|^2)N^{-1}+\cdots )}$$
We know from (\ref{zexpansion}) that $|\tilde z_N|^2$ admits complete expansion, thus if we apply the Taylor
expansion to each coefficient $a_j$ at $0$, then we get complete asymptotics of $A_N/B_{h^N}$. The
first two terms of complete asymptotics are given by:
$$A_N/B_{h^N}  = 1 + O(N^{-2})$$
Sine both factors $e^{\frac{N}{2}\langle[M(\frac{x}{N})-H_{\phi}(\tilde \rho_N)]\eta,\eta \rangle+O(N^{-2})}$ and $A_N/B_{h^N}$ admit complete asymptotics,
the integral (\ref{ssss}) admits complete asymptotics, i.e., $D_{\frac{1}{N}} S_{h^N}D^{-1}_{\frac{1}{N}}$
admits complete asymptotics.
The leading term of the complete asymptotics is given by:
%$$(D_{\frac{1}{N}} S_{h^N}D_{\frac{1}{N}})f(x)= \int_{\R^{m}}\widehat{f}(\eta) e^{\langle e^{i\eta} - 1, x \rangle} \tilde A_{N}(\tilde z,\tilde z e^{i \eta})/B_{h^N}(\tilde z, \tilde z) d \xi$$  where  $\tilde A_{N}(\tilde z,\tilde z e^{i \eta})=  A_{N}(\tilde z,\tilde z e^{i \eta})e^{O(N^{-1})}$. %Since $\phi '(|\tilde z|^2)$ and $\phi '(|z|^2)$ are uniformly bounded , we have the following two estimates $$|\tilde z|^2=O(N^{-1}),\,\,\, |z|^4=N^2|\tilde z|^4+O(N^{-1})$$ where
 %Thus by the complete asymptotics of $A_N(\tilde z,e^{iN^{-1}\xi}\tilde z)$ and $B_{h^N}(\tilde z,\tilde z)$, we have $$\begin{array}{lll}\tilde{A}_N/B_{h^N}&=&\frac{N^m(1+a_{1}(\tilde z,\tilde ze^{iN^{-1}\xi})N^{-1}+\cdots)\cdot e^{ O(N^{-1})}}{N^m(1+a_1(\tilde z,\tilde z)N^{-1}+\cdots)}=1+O(N^{-1})
%  \end{array}$$
%By the formula of $a_1(z,w)$ in \cite{BBS}, in our toric case, $a_1(z,z)$ is also a function of $|z|^2$ and  $a_1(z,w)$ is the almost holomorphic extension of $a_1(z,z)$.  Thus  $a_{1}(z, ze^{i\eta})$ is a function of the vector $|z|^2e^{i\eta}$, hence $a_{1}(\tilde z,\tilde ze^{i\eta})-a_1(\tilde z,\tilde z)=\langle \nabla a_1 ,(e^{i\eta}-1)|z|^2\rangle +O(|z|^4)=O(N^{-1})$. Thus we have
  %$$\tilde{A}_N/B_{h^N}=1+\frac{1}{2}\langle(M(x)-H_{\phi}(\rho)) \eta ,\eta \rangle N^{-1}+O(N^{-2})$$Thus we can get the complete asymptotics if we expand more terms.
 %Then the leading term in the estimate of $D_{\frac{1}{N}} S_{h^N}D_{\frac{1}{N}}$ is given by:
 $$\int_{\R^{m}}\widehat{f}(\xi) e^{\langle e^{i\eta} - 1,  x\rangle} d\eta= \sum _{|\alpha|=0}^{\infty}e^{-\|x\|}\frac{x^\alpha}{\alpha!}\int_{\R^m} \widehat{f}(\eta)e^{i\langle \alpha, \eta\rangle}d\eta= e^{-\|x\|}\sum _{|\alpha|=0}^{\infty}f(\alpha)\frac{x^\alpha}{\alpha!}=S_{h_{BF}^1}f(x)$$
% And the second term which is the coefficient of $N^{-1}$ is given by:
%$$\begin{array}{lll}b_{1}(x)&=&\int_{\R^m}\widehat{f}(\eta)  e^{\langle e^{i \eta} - 1, x\rangle}\left(\frac{1}{2}\langle(M(x)-H_{\phi}(\rho)) \eta ,\eta \rangle\right)d\eta \\&&\\&=&\left(\frac{1}{2}\langle(M(x)-H_{\phi}(\rho))D_x,D_x\rangle\right)S_{h_{BF}^1} (f)(x)\end{array}$$
%which completes the proof.
And the second term which is the coefficient of $N^{-1}$ is given by: $$\begin{array}{lll}b_1(f)(x)&=&\int_{\R^m} \hat f(\eta)e^{\langle e^{i\eta}-1, x\rangle}\left(\langle \frac{1}{2}M^2(x)\nabla^2 h(0)\eta, \eta\rangle \right)d\eta\\ &&\\ &=&\frac{1}{2}\sum_{i,j}a_{ij}(x)\left(e^{-\|x\|}\sum_{\alpha \in \Z^m_+} f_{ij}(\alpha)\frac{x^\alpha}{\alpha!}\right)
\end{array}$$where $(a_{ij}(x))_{i,j}$ denotes the matrix $M^2(x)\nabla ^2h(0)$ and $f_{ij}$ is the partial derivative $\frac{\partial^2 f}{\partial x_i\partial x_j}$.
%Hence we get the complete asymptotics for the dilation operator $D_{\frac{1}{N}} S_{h^N}D^{-1}_{\frac{1}{N}}$.

\bigskip
Now we sketch the proof of the identity (\ref{wall}) where we only rescale variables $ x'$ and fix the rest $x''$. In this case, we use the following Slice-orbit coordinates \cite{SoZ}: Assume $\{F_j\}_{j=1}^{m}$ are all facets which have a common vertex at $0$. Let $F=\cap_{j=1}^k F_j$ be a $(m-k)$-dimensional (open) face. Then by angle-action coordinates, we can split the torus action into two parts $\mathbb{T}^{m}=\mathbb{T}^{k}\times \mathbb{T}^{m-k}$, where $\mathbb{T}^{k}$ is the stabilizer at $\mu_h^{-1}(F)$, and we can split all coordinates  into two parts corresponding to $\mathbb{T}^{k}$ and $\mathbb{T}^{m-k}$ action: We can split $z=(z^{'},z^{''})\in \mathbb{C}^{k}\times \mathbb{C}^{m-k}$ where $(0,z^{''})$ is a local coordinate of the submanifold $\mu_h^{-1}(\bar F)$, where $\bar F$ is the closure of $F$. In addition, we have the decomposition of the moment map $\mu_h(z)=(\mu_h^{'}(z),\mu_h^{''}(z))$, where $\mu_h^{''}(z)$ (resp. $\mu_h^{'}(z)$) is the moment map for the Hamiltonian $\mathbb{T}^{m-k}$ (resp. $\mathbb{T}^{k}$) action on the \kahler toric submanifold with $z^{'}=0$ (resp. $z^{''}=0$).   %Thus by Taylor expansion, we have:
%\begin{equation}\label{dilate}\phi(|z|^{2})=\phi(|z^{'}|^{2},|z^{''}|^{2})=\phi(0,|z^{''}|^{2})+\langle\phi^{'}(0,|z^{''}|^{2}),|z^{'}|^{2}\rangle+O(|z^{'}|^{4})\end{equation}
%where $\phi^{'}$ is the $x^{'}$-derivative of $\phi$.
%If , %then it's easy to see that $|{z}^{'}|^{2}=O(N^{-1})$. Now

 Denote $\varsigma=(\eta, \xi)$ and 
 $\mu_h(z)=(\frac{x'}{N},x'')$, follow the steps in (\ref{ddd}),
$$\begin{array}{lll}
&&(\hat D_{\frac{1}{N}}S_{h^N}\hat D^{-1}_{\frac{1}{N}})f(x',x'')\\&&\\&=&N^{-k}\int_{\R^{m}}\widehat{f}(\frac{\eta}{N},\xi) e^{N(\phi(e^{iN^{-1}\varsigma }| z|^{2})-
\phi(|z|^{2}))}A_{N}( z,  ze^{i N^{-1} \varsigma})/B_{h^N}( z,  z) d\xi d\eta \end{array}$$
%First, $\mu^{''}(z)$ is the moment map for the Hamiltonian $\mathbb{T}^{m-k}$ action on the \kahler toric submanifold with $ z^{'}=0$, thus following 
To the phase of this integral, recall the expansion (\ref{dgdffsag}), we have
$$\begin{array}{lll}\phi(e^{iN^{-1} \varsigma}|\mu_h ^{-1}(\frac{x}{N})|^{2})-
\phi(|\mu_h ^{-1}(\frac{x}{N})|^{2})&=&iN^{-1}\langle \mu_h( z), \varsigma\rangle-\frac{1}{2}N^{-2}\langle H_{\phi}(\rho) \varsigma, \varsigma \rangle +O(N^{-3})\\ &&\\ &=&iN^{-1}\langle\mu''( z),\xi\rangle+\langle\mu'( z),e^{iN^{-1}\eta}-1\rangle+O(N^{-2})\end{array}$$
%the proof of identity (\ref{dgdffsag}), we have, $$\phi(0,| z^{''}|^{2})-\phi(0,e^{iN^{-1}\xi} | z^{''}|^{2})=iN^{-1}\langle\mu''( z),\xi\rangle+\langle\mu'( z),e^{iN^{-1}\eta}-1\rangle+O(N^{-2})$$
%Second, use the Taylor expansion, see equations (164)-(166) in \cite{SoZ}, we have
 %$$\langle\phi^{'}(0,e^{iN^{-1}\xi}| z^{''}|^{2}),e^{iN^{-1}\eta}| z'|^2\rangle-\langle\phi^{'}(0,| z^{''}|^{2}),| z'|^2\rangle=\langle\mu'( z),e^{iN^{-1}\eta}-1\rangle+O(N^{-2})$$ 
 where in the last step, we follow the steps in (\ref{dddhghsd}).
 
 Change variable $\eta/N$ to $\zeta$ and use the asymptotics of $A_N/B_{h^N}$ to get
 $$\begin{array}{lll}(\hat D_{\frac{1}{N}}S_{h^N}\hat D^{-1}_{\frac{1}{N}})f(x',x'')&=& N^{-k}\int_{\R^{k}}\hat{f}(\frac{\eta}{N},\xi) e^{i\langle x'',\xi\rangle}e^{\langle x^{'}, e^{iN^{-1}\eta}-1\rangle}d\xi d\eta+O(N^{-1})\\&&\\&=& \int_{\R^{k}}\hat{f}(\zeta,\xi) e^{i\langle x'',\xi\rangle}e^{\langle x^{'}, e^{i\zeta}-1\rangle}d\xi d\zeta+O(N^{-1})\\&&\\&=&e^{-\|x^{'}\|}\sum _{\alpha \in \Z^k_+}\frac{x^{'\alpha}}{\alpha!}f_{x^{''}}(\alpha)+O(N^{-1})\\&&\\&=&S_{h_{BF}^1}(f_{x''})(x')+O(N^{-1})\end{array}$$
where $f_{x''}(x')$ means we fix the variable $x''$ and consider $f(x',x'')$ as a function of $x'$ and $S_{h_{BF}^1}(f_{x''})(x')$ is the Szasz analytic functions with respect to variables $x'$. 
\end{proof}
\section{Examples of generalized Szasz analytic functions}\label{example}
\subsection{\label{E}$B^m $ with the Bergman metric}
In this subsection, we further compute the generalized Szasz analytic functions for the canonical line bundle over $B^m$ with the Bergman metric. %We will see that this generalized function is related to the negative Binomial or Pascal distribution.
We denote the unit ball as:
$$B^m=\left\{(z_1,\ldots, z_m)\in \C^m|\|z\|^{2}=|z_1|^{2}+\cdots +|z_m|^{2}< 1 \right\}$$
There is a classical \kahler metric on $B^m$, under the logarithmic coordinates, this \kahler potential is   \begin{equation}\varphi(z) =-\log (1-\|z\|^{2})\end{equation} Thus we denote $$\phi(\rho)=-\log({1-e^{\rho_1}-\cdots-e^{\rho_m}})$$
So the moment map is: \begin{equation}\mu_{h_B}(z_{1}, . . . , z_{m}) = \nabla _{\rho}\varphi(\rho) =(\frac{|z_{1}|^{2}}{1-\|z\|^{2}},\cdots , \frac{|z_{m}|^{2}}{1-\|z\|^{2}}) \label{eieie}
\end{equation}
Thus the moment polyhedral set is $ \R_+^m$.
Denote $\|x\|=\sum_{j=1}^{m}x_{j}$ where each $x_j>0$, then the sympletic potential is
 \begin{equation}u_B(x)=\sum_{j=1}^{m}x_{j}\log x_{j}-(1+\|x\|)\log(1+\|x\|) \label{efffieie}
\end{equation}
Now let's compute the Bergman kernel for the line bundle $L^N \to B^m$ with the Bergman metric where $L$ is the canonical line bundle . First, we have the Bergman kernel for the line bundle $\mathcal{O}(N) \to \mathbb{CP}^m$ with the Fubini-Study metric \cite{Z1}:
$$B_{h^{N}_{FS}}(z,w)=\frac{(N+m)!}{N!}(\frac{(1+\langle z,w\rangle)}{\sqrt{1+\|z\|^{2}}\sqrt{1+\|w\|^{2}}})^{N}$$
where $\langle z,w\rangle =z_1\bar w_1+\cdots+z_m\bar w_m$.

 Since $B^m$ is dual to $\mathbb{CP}^{m}$ as a symmetric space in the sense of S. Helgason \cite{H}, its Bergman kernel
 should replace the term $1+\|z\|^{2}$ in $B_{h^{N}_{FS}}$  by $1-\|z\|^{2}$. The hyperplane line bundle $\mathcal{O}(1) \to \mathbb{CP}^{m}$ carries a natural positive Fubini-Study metric, then the canonical bundle $\mathcal{O}(-m-1)\rightarrow \mathbb{CP}^{m}$ will be negative curved. By the duality again, the reverse is true on $B^m$. Hence, we also need to change the sign of the exponent to get \cite{K} $$B_{h_B^{N}}(z,w)=\frac{(N+m)!}{N!}(\frac{(1-\langle z,w\rangle)}{\sqrt{1-\|z\|^{2}}\sqrt{1-\|w\|^{2}}})^{-N}$$
On the diagonal we have \begin{equation}B_{h_B^{N}}(z,z)=\frac{(N+m)!}{N!}\label{kkjhgfjl}\end{equation}
 For simplicity, let's compute the generalized Szasz analytic functions for the unit disc when $m=1$, the same results are valid for higher dimension. In this case, by simple computation \begin{equation}\|z^{j}\|^{2}_{h_B^{N}}=\frac{j!\Gamma(N-1)}{\Gamma( N+j)}\end{equation} where $\Gamma(n)=(n-1)!$. Thus the orthonormal basis is \begin{equation}\left\{\sqrt{\frac{\Gamma( N+j)}{j!\Gamma(N-1)}}z^{j}, \,\,\ j \geq 0, \,\,\,N\geq 2\right\}\label{kjkjhl}\end{equation}

Hence, \begin{equation}\label{dfggdfddf}\begin{array}{lll}S_{h_B^{N}}(f)(x) &=& \frac{1}{B_{h_B^{N}}(z,z)}\sum _{j = 0}^{\infty}f(\frac{j}{N})\frac{e^{N(u_B(x)+\langle\frac{j}{N}-x,\nabla u_B(x)\rangle)}}{\|z^{j}\|^{2}_{h_B^{N}}} \\ &&\\ &=& \frac{(1+x)^{-N}}{B_{h_B^{N}}(z,z)}\sum _{j=0}^{\infty}f(\frac{j}{N})\frac{(\frac{x}{1+x})^{j}}{\|z^{j}\|^{2}_{h_B^{N}}}\\&&\\ &=&\frac{N-1}{N+1}\cdot (1+x)^{-N}\sum _{j=0}^{\infty}(N)_{j}f(\frac{j}{N})\frac{(\frac{x}{1+x})^{j}}{j!}\\&&\\ &=& (1+x)^{-N}\sum _{j=0}^{\infty}(N)_{j}f(\frac{j}{N})\frac{(\frac{x}{1+x})^{j}}{j!}+O(\frac{1}{N})\end{array}\end{equation}
where $(N)_{j}=N(N+1)\cdots (N+j-1)$.

Hence, in view of Theorem \ref{BBa}, we have the following estimate :
$$(1+x)^{-N}\sum _{j=0}^{\infty}(N)_{j}f(\frac{j}{N})\frac{(\frac{x}{1+x})^{j}}{j!} \to f(x) ,\,\,\,as \,\,\, N \to \infty$$

\subsection{ Probabilistic interpretation} In this subsection, we give some probabilistic interpretation of the generalized Szasz analytic functions. It's suffice to consider the $1$-dimensional case.

\subsubsection{$\mathbb{CP}^1$ and binomial distribution}
Recall in \cite{Z1} that if we take the \kahler toric manifold as $\mathbb{CP}^1$ with Fubini-Study metric, then the generalized Szasz analytic function $S_{h_{FS}^{N}}(f)$ becomes the classical Bernstein polynomial , where $f\in C^{\infty}([0,1])$.

Assume $X$ is the independent and identically distributed Bernoulli trial which yields success with probability $x \in (0,1)$. $X$ has the $N+1$ values $j=0,1, \cdots, N$ with the probability $$P_{\mathbb{CP}^1, N}(X=j)= {N \choose j}x^j(1-x)^{N-j}. \,\,\,$$Thus we have
$$\begin{array} {lll} E(f(\frac{X}{N}))=\sum_{j=0}^N f(\frac{j}{N}){N \choose j} x^j(1-x)^{N-j}=S_{h_{FS}^{N}}(f)\end{array} $$
\subsubsection{$\C$ and Poisson distribution}
Recall that if we take our \kahler toric manifold as $\C$ with the Bargmann-Fock metric, then the generalized Szasz analytic function  becomes the classical Szasz analytic function $S_{h_{BF}^{N}}(f)$, where $f\in C_0^{\infty}([0,\infty))$.

The Poisson distribution $X$ describes the number of independent events occurring in a fixed period of time. $X$ has the infinite number values $j=0,1,2 \cdots$ with the probability $$P_{\C, N}(X=j)=e^{-Nx}\frac{(Nx)^j}{j!}\,\,\, $$where $Nx>0$ representing the average arrival rate during the period. Thus we have $$\begin{array} {lll} E(f(\frac{X}{N}))=\sum_{j=0}^ {\infty} f(\frac{j}{N})e^{-Nx}\frac{(Nx)^j}{j!}=S_{h_{BF}^{N}}(f)\end{array} $$
\subsubsection{Disk and Pascal distribution}
Recall that the Pascal distribution is the probability distribution of failures before the first success in a series of independent and identically distributed Bernoulli trials.

Assume the independent and identically distributed Bernoulli trial yields success with probability $p \in (0,1)$. And denote $q=1-p$, then for any integer $j\geq N$,
denote $P_{B,N}(X=j)$ as the probability of the event that there are exact $N-1$ successful trials among $X_{1},X_{2},\ldots ,X_{j-1}$ and $X_{j}$ is the successful trial. Hence we have $$P_{B,N}(X=j)={j-1 \choose N-1}p^{N}q^{j-N},\,\,\, j=N,N+1,N+2 \cdots$$
\begin{theo}If $f(x)$ is smooth with compact support on $[0,\infty)$, then we have:
\begin{equation} S_{h_B^{N}}(f)(x)= E(f(\frac{X}{N}))+ O(\frac{1}{N})
\end{equation}
where $X$ is Pascal distribution and the probability of the successful trial is $p=\frac{1}{1+x}$.
\end{theo}
\begin{proof} By the fact $p=\frac{1}{1+x}$ and $q=\frac{x}{1+x}$, we have $$\begin{array}{lll}E(f(\frac{X}{N})) &=& \sum _{j\geq N}f(\frac{j}{N}){j-1 \choose N-1}p^{N}q^{j-N}\\&&\\&=&p^{N}\sum_{j=0}^{\infty}f(\frac{j}{N}){N+j-1 \choose N-1}q^{j} \\&&\\&=& (1+x)^{-N}\sum_{j}(N)_{j}f(\frac{j}{N})\frac{(\frac{x}{1+x})^{j}}{j!}\\&&\\&=&  S_{h_B^{N}}(f)(x)- O(\frac{1}{N})\end{array}$$
in the last step we use the estimate (\ref{dfggdfddf}).
\end{proof}

\end{document}